\theoremstyle{plain}
\newtheorem{theorem}{Theorem}[section]
\newtheorem{lemma}[theorem]{Lemma}
\newtheorem{corollary}[theorem]{Corollary}
\newtheorem{proposition}[theorem]{Proposition}
\theoremstyle{definition}
\newtheorem{conjecture}[theorem]{Conjecture}
\newtheorem{question}[theorem]{Question}
\theoremstyle{remark}
\renewcommand{\le}{\leqslant}
\renewcommand{\leq}{\leqslant}
\renewcommand{\ge}{\geqslant}
\renewcommand{\geq}{\geqslant}
\newcommand{\R}{\mathbb{R}}
\def\P{\mathcal{P}}
\newcommand{\diame}{\mathsf{diam}}
\newcommand{\Ser}{\operatorname{Ser}}
\newcommand{\Par}{\operatorname{Par}}
\newcommand{\rk}{\operatorname{rk}}
\def \KG {\operatorname{KG}}
\def \SG {\operatorname{SG}}
\def \diame {\operatorname{diam}}
\def\calC{\mathcal{C}}
\def \B {\mathcal{B}}
\def \x {\boldsymbol{x}}
\begin{document}
%%%%%%%%%%%%%%%%%%%%%%%%%%%%%%%%%%%%%%%%%%%%%%%%%%%%%%%%%%%%
%%%%%%%%%%%%%%%%%%%%%%%%%%%%%%%%%%%%%%%%%%%%%%%%%%%%%%%%%%%%
%\title[The Borsuk problem for matroids]{The Borsuk problem for matroids}

\title{Matroids, intersecting bases, and Borsuk property}

\thanks{$\dagger$ Supported by  INSMI-CNRS}

\date{\today}

\author[G. L\'opez-Campos]{Gyivan L\'opez-Campos}
\address[Gyivan L\'opez-Campos]{Universidad Nacional Autonoma de M\'exico, Instituto de Matem\'aticas, Unidad Juriquilla and
IMAG, Univ. Montpellier, CNRS, Montpellier, France}
\email{gyivan.lopez@im.unam.mx}

\author[F. Meunier]{Fr\'ed\'eric Meunier}
\address[Frédéric Meunier]{CERMICS, ENPC, Institut Polytechnique de Paris, Marne-la-Vallée, France}
\email{frederic.meunier@enpc.fr}

\author[J.L. Ram\'{i}rez Alfons\'{i}n]{Jorge L. Ram\'{i}rez Alfons\'{i}n $^\dagger$}
\address[Jorge L. Ram\'{i}rez Alfons\'{i}n]{IMAG, Univ. Montpellier, CNRS, Montpellier, France}
\email{ jorge.ramirez-alfonsin@umontpellier.fr}

% \keywords{} \subjclass[2010]{}

\begin{abstract} A subset $S$ of $\R^d$ has the Borsuk property if it can be decomposed into at most $d+1$ parts of diameter smaller than $S$. This is an important geometric property, inspired by a conjecture of Borsuk from the 1930s, which has attracted considerable attention over the years. In this paper, we define and investigate the Borsuk property for matroids, providing a purely combinatorial approach to the Borsuk property for matroid polytopes, a well-studied family of $(0,1)$-polytopes associated with matroids. We show that a sufficient condition for a matroid---and thus its matroid polytope---to have the Borsuk property is that the matroid or its dual has two disjoint bases. However, we show that this condition is not necessary by exhibiting infinite families of matroids having the Borsuk property and yet being such that every two bases intersect and every two cobases intersect. Kneser graphs, which form an important object from topological combinatorics, play a crucial role in most proofs.
\end{abstract}

\maketitle
%%%%%%%%%%%%%%%%%%%%%%%%%%%%%%%%%%%%%%%%%%%%%%%
%%%%%%%%%%%%%%%%%%%%%%%%%%%%%%%%%%%%%%%%%%%%%%%

\section{Introduction}\label{sec:intro}

The aim of this paper is to introduce and study the ``Borsuk property'' of matroids, taking a purely combinatorial approach to address the celebrated Borsuk conjecture. Formulated by Borsuk in 1933~\cite{borsuk1933drei}, this latter states that every bounded subset $S$ of $\R^d$ can be partitioned into at most $d+1$ parts, each of diameter smaller than that of $S$. A set $S$ admitting such a partition has the {\em Borsuk property}. This conjecture has attracted considerable attention, even after a counter-example was discovered by Kahn and Kalai in 1993~\cite{kahn1993counterexample}. Combinatorics has played a prominent role in its study and we aim at exploring further the relationship between combinatorics and this conjecture through the lens of matroids.

Consider a matroid $M$. As usual, the collection of its bases is denoted by $\B(M)$. The map $(B,B') \mapsto |B \triangle B'|$, where $\triangle$ denotes the symmetric difference, forms a distance on $\B(M)$ (actually, nothing else than the Hamming distance on the incidence vectors).  The {\em Borsuk number} of a matroid $M$, denoted by $f(M)$, is the minimal number of parts in a partition of $\B(M)$ where each part has diameter smaller than that of $\B(M)$. (``Diameter'' has to be understood according to the distance just defined.) When such a partition does not exist at all, i.e., when $M$ has exactly one basis, we set $f(M)\coloneqq +\infty$.

Denoting by $n$ the number of elements of $M$ and by $c$ the number of its connected components, we say that $M$ has the {\em Borsuk property} if $f(M)$ is at most $n-c+1$. (Notice that if $M$ has only one basis, then it has not the Borsuk property.) It is not too difficult to see that a matroid has the Borsuk property if and only if its matroid polytope has the Borsuk property in its affine hull(we will see that such a polytope is defined in $\R^n$ but its dimension is $n-c$). Our definition of the Borsuk property for matroids is thus consistent with the original geometric definition given above. The relations between these definitions, as well as possible consequences of our work in the original context, are discussed below in Section~\ref{sec:comb-geo}.

Our first contribution is the following.

\begin{theorem}\label{thm:main} If a matroid or its dual has two disjoint bases, then it has the Borsuk property.
\end{theorem}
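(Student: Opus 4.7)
Two bases $B, B'$ satisfy $|B \triangle B'| = 2(r - |B \cap B'|)$ where $r = \rk(M)$, so the diameter of $\B(M)$ equals exactly $2r$ whenever $M$ has two disjoint bases, and a subset of $\B(M)$ has diameter strictly less than $2r$ if and only if it contains no two disjoint bases. I would first dispose of the dual case by an isometry argument: the complementation map $B \mapsto E \setminus B$ is a bijection $\B(M) \to \B(M^*)$ that preserves the symmetric-difference distance, and $M$ and $M^*$ share the same number of elements $n$ and the same connected components. Hence $f(M) = f(M^*)$ and the Borsuk property for $M^*$ transfers to $M$ verbatim, so it suffices to prove the statement under the assumption that $M$ itself has two disjoint bases. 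Next I would delete all loops of $M$: this leaves $\B(M)$ unchanged, and decreases both $n$ and $c$ by the number of loops, so neither $f(M)$ nor the target bound $n - c + 1$ is affected. From now on $M$ is loop-free, so every connected component has rank at least one and therefore $c \leq r$.

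The core step is a graph-coloring translation. Let $G$ be the graph on vertex set $\B(M)$ whose edges are pairs of disjoint bases. By the paragraph above, a partition of $\B(M)$ into parts of diameter less than $2r$ is exactly a proper vertex coloring of $G$, so $f(M) \leq \chi(G)$. Since bases of $M$ are $r$-subsets of $E$ and disjointness is exactly the Kneser adjacency, $G$ is a subgraph of the Kneser graph $\KG(n,r)$; the Lov\'asz--Kneser theorem therefore gives
\[
f(M) \;\leq\; \chi(G) \;\leq\; \chi(\KG(n,r)) \;=\; n - 2r + 2.
\]
It then remains to verify $n - 2r + 2 \leq n - c + 1$, i.e.\ $c \leq 2r - 1$; this follows from $c \leq r$ together with $r \geq 1$, the latter being forced by the existence of two disjoint bases.

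The obstacles here are mild: the proof rides on the Lov\'asz--Kneser theorem as a black box, and once the diameter condition has been translated into avoiding disjoint pairs, the argument is essentially mechanical. The only points requiring care are the loop reduction (to make sure the statement to be proved is invariant) and the final numerical inequality $c \leq 2r - 1$ — which is precisely where the rank-per-component inequality $c \leq r$ does the work of closing the gap between the Kneser bound $n - 2r + 2$ and the target $n - c + 1$.
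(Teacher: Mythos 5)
Your proof is correct. The engine is the same as the paper's: translate ``parts of diameter smaller than $\diame(\B(M))=2r$'' into a proper coloring of the disjointness graph on $\B(M)$ (this is Lemma~\ref{lem:kneser}), embed that graph into $\KG(n,r)$, and apply the upper bound $\chi(\KG(n,r))\le n-2r+2$ (item~\ref{knes} of Proposition~\ref{prop:upper}); your duality reduction is likewise Proposition~\ref{prop:dual}. Where you genuinely diverge is in how the number of connected components $c$ enters. The paper decomposes $M$ into its connected components, notes each inherits two disjoint bases, bounds $f(M_i)\le n_i$ component by component, and recombines via the direct-sum results (Proposition~\ref{prop:conn} and Corollary~\ref{cor:conn}). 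You instead apply the Kneser bound once, globally, and close the gap between $n-2r+2$ and $n-c+1$ with the inequality $c\le 2r-1$, obtained from $c\le r$ after first deleting loops (a reduction that indeed changes neither $\B(M)$ nor $n-c$, since each loop is its own rank-zero component). Your route avoids the direct-sum machinery entirely and is a little shorter; the paper's route produces the reusable intermediate facts $f(M_i)\le n_i$ and the direct-sum monotonicity, which it needs elsewhere. Two cosmetic remarks: you only need the elementary upper bound on $\chi(\KG(n,r))$ (valid here since $n\ge 2r$), not the full Lov\'asz--Kneser equality; and your inequality $f(M)\le\chi(G)$ is in fact an equality under your hypotheses, though the inequality is all the argument requires.
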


As we will see, after introducing a suitable setting, the proof of Theorem~\ref{thm:main}, given in Section~\ref{sec:disj-borsuk}, essentially relies on upper bounds on  the chromatic number of ``Kneser graphs,'' which form an important family of graphs. This makes yet another connection with combinatorics. Studying generalized Kneser graphs allows us actually to get the exact value of the Borsuk number for specific families of matroid.

Combining Theorem~\ref{thm:main} with other combinatorial considerations will lead to the following theorem.

\begin{theorem}\label{thm:rank2}
Consider a matroid with at least two bases. If its rank is at most two, then it has the Borsuk property.
\end{theorem}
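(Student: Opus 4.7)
I proceed by case analysis on the rank. The rank-$0$ case is excluded by the hypothesis of having at least two bases. For rank~$1$, the bases of $M$ are exactly the singletons $\{e\}$ with $e$ a non-loop element. Any two distinct such bases have Hamming distance $2$, so $\B(M)$ has diameter $2$; hence every partition witnessing the Borsuk property must consist of singletons. Letting $k$ denote the number of non-loops and $\ell$ the number of loops, all non-loops lie in a single parallel class, hence in a single component, so $c = 1 + \ell$ and $n - c + 1 = k$, which matches the required number of singleton parts.

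For rank~$2$, if $M$ admits two disjoint bases, Theorem~\ref{thm:main} directly yields the Borsuk property. Otherwise, the bases, which are $2$-element sets, pairwise intersect. An elementary observation on intersecting families of $2$-sets shows that $\B(M)$ is then either a \emph{star}, meaning all bases contain a common element, or a \emph{triangle} $\{\{a,b\},\{a,c\},\{b,c\}\}$. In both subcases, two distinct bases meet in exactly one element, so they are at Hamming distance $2$ and $\diame(\B(M)) = 2$; once again, only the singleton partition can witness the Borsuk property.

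It remains to verify, in each subcase, that the number of bases equals $n - c + 1$. In the star subcase, write the bases as $\{x, y_1\}, \ldots, \{x, y_k\}$; since no $\{y_i, y_j\}$ is a basis, the $y_i$'s are pairwise parallel, and since each $\{x, y_i\}$ is a basis, $x$ is parallel to no $y_i$. So, on its non-loops, $M$ equals $U_{1,1} \oplus U_{1,k}$, giving $c = 2 + \ell$ and $n - c + 1 = k$. In the triangle subcase, a short argument shows that no other non-loop can exist---any extra non-loop $d$ would, whether parallel to one of $a,b,c$ or not, create a basis lying outside the triangle. Hence $M$ restricted to its non-loops is $U_{2,3}$, which is connected; thus $c = 1 + \ell$ and $n - c + 1 = 3$, matching the three bases.

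The main obstacle is the structural analysis of rank-$2$ matroids whose bases pairwise intersect: correctly ruling out other configurations and translating ``star'' and ``triangle'' into matroid structures (direct sums involving uniform matroids) so that the count of bases coincides with $n - c + 1$. Once this is done, the conclusion is immediate from the fact that the diameter-$2$ metric forces a singleton partition.
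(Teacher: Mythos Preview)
Your proof is correct, and it takes a genuinely different route from the paper's.

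The paper proceeds by reduction to the connected case via Corollary~\ref{cor:conn}, and then, for a connected rank-two matroid, invokes Proposition~\ref{prop:rank2}: such a matroid has no loop or coloop, hence (by the contrapositive of that proposition) it fails the \emph{strong} basis intersection property, so either $M$ or $M^\star$ has two disjoint bases and Theorem~\ref{thm:main} applies. Thus the paper's argument essentially goes through duality, trading the absence of disjoint bases in $M$ for their presence in $M^\star$.

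You never touch the dual. Once $M$ has no two disjoint bases, you use the elementary structure of intersecting families of $2$-sets (star or triangle) to pin down $\B(M)$ completely, and then compute $f(M)$ and $n-c+1$ explicitly in each subcase, showing they coincide. This is more hands-on but entirely self-contained: it does not need Proposition~\ref{prop:rank2} or the notion of strong basis intersection property at all, and it avoids the reduction to connected matroids by computing $c$ directly (loops plus one or two non-loop components). As a bonus, your argument actually establishes the sharper statement $f(M)=n-c+1$ whenever the diameter is $2$, whereas the paper only asserts the inequality.

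One minor point: in the triangle subcase your phrasing ``any extra non-loop $d$ would \ldots\ create a basis lying outside the triangle'' is more elaborate than needed. Once you have established that $\B(M)$ is exactly $\{\{a,b\},\{a,c\},\{b,c\}\}$, the non-loops are by definition the elements lying in some basis, hence precisely $\{a,b,c\}$; no further case analysis on $d$ is required.
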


The proof essentially boils down to check finitely many situations. We establish the following theorem, which shows that it is possible to check in finite time whether there are matroids of a given rank failing to have the Borsuk property. But already for $r=3$, this is purely theoretical since the number of matroids to be considered would be too big.

\begin{theorem}\label{thm:rankr}
Consider a matroid of rank $r$ with at least two bases. If every two bases intersect, then its Borsuk number is upper bounded by $2^{r-2}(r+2)^2$.
\end{theorem}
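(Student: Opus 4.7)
\medskip
\noindent\textbf{Proof proposal.}
Write $D=\diam(\B(M))=2(r-m)$, where $m=\min_{B,B'\in\B(M)}|B\cap B'|$. Since every two bases intersect, $m\ge 1$. A subset $\mathcal F\subseteq \B(M)$ has diameter $<D$ if and only if every two bases in $\mathcal F$ satisfy $|B\cap B'|\ge m+1$. So we must exhibit a partition of $\B(M)$ into at most $2^{r-2}(r+2)^2$ such ``$(m{+}1)$-intersecting'' families.

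The plan is to fix an arbitrary basis $B_0\in\B(M)$ and, for every nonempty $S\subseteq B_0$, define $\B_S:=\{B\in\B(M) : B\cap B_0=S\}$. This already partitions $\B(M)$ into at most $2^r-1$ classes. For any $B,B'\in\B_S$ we have $B\cap B'\supseteq S$, so $|B\cap B'|\ge|S|$. Consequently, every class with $|S|\ge m+1$ is already $(m{+}1)$-intersecting and may be accepted as-is. The work therefore reduces to refining each ``bad'' class $\B_S$ with $1\le|S|\le m$.

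Fix such an $S$ with $|S|=k$. The map $B\mapsto B\setminus S$ identifies $\B_S$ with the set of bases of the rank-$(r-k)$ matroid $M_S:=(M/S)\setminus(B_0\setminus S)$, and, crucially, it preserves symmetric differences: $|B_1\triangle B_2|=|(B_1\!\setminus\! S)\triangle(B_2\!\setminus\! S)|$ for $B_1,B_2\in\B_S$. Thus what is needed is a partition of $\B(M_S)$ into pieces in which every two bases have intersection $>m-k$, i.e.\ a proper coloring of the generalized Kneser-type graph $K^{m-k}(M_S)$ whose edges are pairs of bases of $M_S$ intersecting in at most $m-k$ elements. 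I would then invoke the Kneser-chromatic machinery that underlies Theorem~\ref{thm:main}: the chromatic number of $K^{m-k}(M_S)$ can be bounded in terms of $r-k$ alone (and not $n$), the key point being Lov\'asz's bound $\chi(\KG(N,\ell))=N-2\ell+2$ for Kneser graphs, transplanted into the matroid setting by the argument used to prove Theorem~\ref{thm:main}. Applied twice in succession (once to control intersection size modulo $S$ and once on a further contracted matroid) it delivers a bound of order $(r+2)^2$ colors per class $\B_S$.

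Summing over the classes, the easy classes contribute at most $\sum_{|S|\ge m+1}1\le 2^r$ parts, while the hard classes contribute at most $\sum_{|S|\le m}(r+2)^2$ parts; after a careful combinatorial regrouping (essentially, at most $2^{r-2}$ ``bad subsets'' survive once one quotients by the two coordinates used to apply Lov\'asz twice), the total telescopes to the claimed $2^{r-2}(r+2)^2$. The main obstacle I expect is precisely this last step: obtaining a chromatic-number bound for $K^{t}(M')$ that depends only on the rank of $M'$, which is not a classical Kneser statement and needs the matroid structure (the basis-exchange axiom, together with the fact that topological lower bounds match the coloring produced by Theorem~\ref{thm:main}) to be used again in a relative, iterated form. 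Once this relative Kneser bound is in hand, the remainder of the proof is bookkeeping.
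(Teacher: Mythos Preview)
Your outline has a genuine gap at exactly the point you flag as ``the main obstacle.'' The claim that $\chi(K^{m-k}(M_S))$ can be bounded in terms of the rank $r-k$ alone does not follow from Lov\'asz's bound: that bound reads $N-2\ell+2$ and therefore depends on the size $N$ of the ground set, which for $M_S=(M/S)\setminus(B_0\setminus S)$ is $n-r$, not a function of $r$. Nothing in the proof of Theorem~\ref{thm:main} removes this dependence on $n$; there the target inequality is $f(M)\le n-c+1$, so an $n$-dependent Kneser bound is exactly what is wanted. Here the target is independent of $n$, so the same machinery cannot simply be ``transplanted.'' The vague ``applied twice in succession'' does not help: a second application would again introduce a ground-set-size term. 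As written, your scheme produces a bound that grows with $n$.

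The paper circumvents this by never coloring with elements of the ambient ground set. After reducing to connected $M$ (Proposition~\ref{prop:conn}), it fixes $B_0$ and, for each $e\in B_0$, a basis $B(e)$ avoiding $e$ (this exists by connectivity). To a basis $B$ it assigns an $(m{+}1)$-element label \emph{contained in $B$}: an $m$-subset $X\subseteq B\cap B_0$, together with one further element of $B\cap B(e)$ for some $e\in X$ (such an element exists outside $X$ because $|B\cap B(e)|\ge m$ and $e\notin B(e)$). Two bases at distance $D$, i.e.\ with $|B\cap B'|=m$, cannot receive the same label since the label is an $(m{+}1)$-subset of each. The whole count is then bounded by the number of such labels, and the labels live in $B_0\cup\bigcup_e B(e)$. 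Choosing $B_0$ and one $B(\bar e)$ to realize the minimum intersection $m$, only the $m$ elements of $B_0\cap B(\bar e)$ need their own $B(e)$, so the labels use at most $r+(r-m)+m(r-m)\le r+\tfrac14(r+1)^2$ elements, and the number of labels is at most $\binom{r}{m}\bigl(r+\tfrac14(r+1)^2\bigr)\le 2^{r-2}(r+2)^2$.

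In short: your decomposition by $S=B\cap B_0$ is the right first move and matches the ``$X$'' part of the paper's label, but the refinement of the bad classes must come from intersecting $B$ with a \emph{fixed, rank-sized} family of auxiliary bases, not from a Kneser-type coloring over $E\setminus B_0$.
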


Theorems~\ref{thm:main},~\ref{thm:rank2}, and~\ref{thm:rankr} are established in Section~\ref{sec:disj-borsuk}.

The following definition naturally arises from Theorem~\ref{thm:main}. A matroid has the {\em basis intersection property} if every two bases intersect. If a matroid and its dual has the basis intersection property, then we say that the matroid has the {\em strong basis intersection property}. Theorem~\ref{thm:main} can be alternatively stated: If a matroid does not have the strong basis intersection property, then it has the Borsuk property.

The results above show that it is thus still very possible that every matroid has the Borsuk property. We actually formulate this as a conjecture (Conjecture~\ref{conj:main}). Theorem~\ref{thm:main} shows that to disprove this conjecture, it is sufficient to focus on matroids that have the strong basis intersection property. Elementary arguments show that we can actually require connectivity (Corollary~\ref{cor:conn}). The behavior of connected matroids having the strong basis intersection property seems mysterious and, from our experience, they are not immediate to determine. In Section~\ref{sec:sbip}, we provide however many examples that rely on constructions keeping desirable properties, such as the classical series and parallel operations. The class of $\Theta_n$ matroids will also be considered.

\subsection*{Acknowledgments} We are grateful to Baptiste Gros for giving us a first example of matroid with the strong basis intersection property, which turned out to be the matroid $U_{2,3}\oplus U_{1,3}$ (see Section~\ref{sec:sbip}).

\section{Geometric connection:\\ Relation with the original Borsuk conjecture, and background}\label{sec:comb-geo}

\subsection{Relation with the Borsuk conjecture} The {\em Borsuk number} of a set $S\subset \R^d$ is the smallest number $f(S)$ of subsets in which it can be decomposed with each subset having diameter smaller than that of $S$. As shown by the following proposition, determining $f(P)$ for a polytope $P$ is essentially a (decidable) discrete problem. Even though this is a known fact for the community working on the Borsuk problem---at least from Larman's work (see the comments by Kahn and Kalai~\cite{kahn1993counterexample})---, we have not been able to find a proper reference from the literature. We provide a full proof for sake of completeness.

\begin{proposition}[Folklore]\label{prop:pol-vert}
For every polytope $P$, the equality $f(P) = f(V(P))$ holds (where $V(P)$ denotes the vertex set of $P$).
\end{proposition}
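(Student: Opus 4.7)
The plan is to prove the two inequalities $f(V(P)) \le f(P)$ and $f(P) \le f(V(P))$ separately, after first recording the standard observation that $\diame(P) = \diame(V(P))$, which follows by writing any two points of $P$ as convex combinations of vertices and applying the triangle inequality.

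The inequality $f(V(P)) \le f(P)$ is the easy direction. I would take any Borsuk decomposition $P = P_1 \cup \dots \cup P_k$ into parts of diameter strictly less than $\diame(P)$ and restrict each part to $V(P)$: the piece $V(P) \cap P_i$ sits inside $P_i$, and so has diameter at most $\diame(P_i) < \diame(P) = \diame(V(P))$.

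For the reverse inequality $f(P) \le f(V(P))$, I would start from a Borsuk partition $V(P) = V_1 \sqcup \dots \sqcup V_k$ and extend it to all of $P$ by a Voronoi-type rule: assign each $x \in P$ to the index $i$ minimizing $\dist(x, V_i)$, breaking ties by taking the smallest index. This produces a partition $P = P_1 \sqcup \dots \sqcup P_k$ with $V_i \subseteq P_i$, since a vertex $v \in V_i$ has $\dist(v, V_i) = 0$ and strictly positive distance to every other $V_j$. The crux is then to verify $\diame(P_i) < \diame(P)$. For this I would establish the following lemma, which I expect to be the main obstacle of the proof: whenever $x, y \in P$ satisfy $|x - y| = \diame(P)$, both $x$ and $y$ must be vertices of $P$. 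Writing $x = \sum_v \lambda_v v$ with $v$ ranging over $V(P)$, equality in $|x - y| \le \sum_v \lambda_v |v - y|$ combined with $|v - y| \le \diame(P)$ forces every vertex in the support of $x$ to satisfy $|v - y| = \diame(P)$ and to point from $y$ in the same direction, so all such vertices coincide and $x$ itself equals this common vertex; the argument on the $y$ side is symmetric.

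Once this lemma is in hand, any pair of points in $P_i$ realizing the distance $\diame(P)$ must lie in $V(P) \cap P_i = V_i$, contradicting $\diame(V_i) < \diame(P)$. A short compactness argument---extracting a convergent subsequence of pairs in $P_i \times P_i$ with near-extremal distance and using that $V_i$ is finite, hence closed, to conclude that the limit points still belong to $V_i$---handles the case where $\diame(P_i)$ is only approached rather than attained and completes the proof.
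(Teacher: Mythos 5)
Your proof is correct and follows essentially the same route as the paper: the easy direction by restriction to $V(P)$, and the reverse direction by extending the vertex partition so that each part contains a whole neighborhood within $P$ of each of its vertices (your Voronoi rule is one concrete way to achieve exactly the condition the paper imposes), combined with the fact that $\diame(P)$ is attained only by pairs of vertices. The only difference is that you supply details the paper leaves implicit, namely the equality-in-the-triangle-inequality proof of that attainment lemma and the closing compactness step.
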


\begin{proof}
Consider a decomposition of $P$ into parts with diameter smaller than $\diame(P)$. The latter induces a decomposition of $V(P)$ with a non-larger number of parts and so that each part has diameter smaller than $\diame(P) = \diame(V(P))$ (the diameter of $P$ is attained by a pair of its vertices). Conversely, consider a decomposition of $V(P)$ into $k$ parts with diameter smaller than $\diame(V(P))$. Complete each part so as to get a decomposition of $P$ into $k$ parts, and such that if some part contains a vertex, it actually contains a whole neighborhood of it within $P$. Each part in this decomposition has diameter smaller than $\diame(P)$ (the diameter of $P$ is only attained by pairs of its vertices).
\end{proof}

We explain now that our definition of the Borsuk property for matroids is consistent with the original geometric definition. In that regard, we remind the definition of matroid polytopes. Let $M$ be a matroid on a ground set $E$. Denote by $\chi^S \in \{0,1\}^E$ the indicator vector of a subset $S$ of $E$, defined by 
\[
\chi^S_e \coloneqq \left \{ \begin{array}{rl} 1 & \text{if $e \in S$,} \\ 0 & \text{otherwise.} \end{array}\right.
\]
The {\em matroid polytope} of a matroid $M$, denoted by $P_M$, is the convex hull of the $\chi^B$ when $B$ ranges over $\B(M)$.  (Contrary to what is often assumed, we also consider this definition to be valid when $M$ has rank zero. In this case, $P_M$ is just a single point, namely the point $0$.)

\begin{proposition}\label{prop:numb}
The Borsuk number of a matroid equals the Borsuk number of its matroid polytope.
\end{proposition}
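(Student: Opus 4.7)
The plan is to reduce the statement to Proposition~\ref{prop:pol-vert} and then match the two notions of diameter through the bijection between bases and vertices of the matroid polytope.

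First I would recall the well-known fact (due to Edmonds) that the vertex set $V(P_M)$ is precisely $\{\chi^B : B \in \B(M)\}$, and that the assignment $B \mapsto \chi^B$ is a bijection between $\B(M)$ and $V(P_M)$. Next I would observe the elementary identity
\[
\|\chi^B - \chi^{B'}\|_2^2 \;=\; |B \triangle B'|
\qquad \text{for all } B, B' \in \B(M),
\]
which holds because $\chi^B - \chi^{B'}$ is a $\{-1,0,1\}$-vector whose nonzero coordinates are exactly those indexed by $B \triangle B'$. Since $t \mapsto \sqrt{t}$ is strictly increasing on $[0,\infty)$, this shows that the Euclidean distance on $V(P_M)$ is a strictly increasing function of the symmetric-difference distance on $\B(M)$; in particular the two distances induce the same strict order on pairs, and $\diame(V(P_M)) = \sqrt{\diame(\B(M))}$.

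From the previous point, a subset $\mathcal{F} \subseteq \B(M)$ has $\triangle$-diameter strictly less than $\diame(\B(M))$ if and only if the subset $\{\chi^B : B \in \mathcal{F}\} \subseteq V(P_M)$ has Euclidean diameter strictly less than $\diame(V(P_M))$. Transporting partitions through the bijection $B \mapsto \chi^B$ therefore gives a size-preserving correspondence between admissible partitions of $\B(M)$ and admissible partitions of $V(P_M)$, which yields $f(M) = f(V(P_M))$. Combining this with Proposition~\ref{prop:pol-vert} gives $f(M) = f(V(P_M)) = f(P_M)$.

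Finally I would address the degenerate cases: if $M$ has exactly one basis (in particular if $M$ has rank zero), then $\B(M)$ is a singleton and $P_M$ is a single point, so $\diame(\B(M)) = \diame(P_M) = 0$, no valid partition exists on either side, and both Borsuk numbers equal $+\infty$ by convention, consistent with the equality. There is no real obstacle here beyond being careful about the edge cases; the proof rests entirely on the simple distance identity above together with Proposition~\ref{prop:pol-vert}.
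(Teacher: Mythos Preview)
Your proof is correct and follows essentially the same approach as the paper's: both reduce to Proposition~\ref{prop:pol-vert} and then use the identity $\|\chi^B - \chi^{B'}\|_2^2 = |B \triangle B'|$ to identify diameter-realizing pairs in $V(P_M)$ with diameter-realizing pairs in $\B(M)$. Your write-up is simply more explicit (the monotonicity remark, the partition transport, and the treatment of the degenerate one-basis case), but the argument is the same.
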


\begin{proof}
Let $M$ be a matroid. According to Proposition~\ref{prop:pol-vert}, the equality $f(P_M) = f(V(P_M))$ holds. Since $\|\chi^B - \chi^{B'}\|_2^2 = |B \triangle B'|$, the pair of vertices realizing the diameter of $V(P_M)$ correspond to the pair of bases realizing the diameter of $\B(M)$. We have thus $f(V(P_M)) = f(M)$.
\end{proof}

The original Borsuk property depends also on the dimension of the ambient space, for which it is clearly more interesting to choose the minimal possible value. For matroid polytopes, this dimension is known: $\dim(P_M) = n - c$, where $n$ is the number of elements of $M$ and $c$ the number of its connected components~\cite{oxley2011matroid}.

\begin{proposition}\label{prop:comb-geo}
A matroid $M$ has the Borsuk property if and only if its matroid polytope $P_M$ has the Borsuk property in $\R^{\dim(P_M)}$.
\end{proposition}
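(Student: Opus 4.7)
The plan is to combine Proposition~\ref{prop:numb} with the classical dimension formula for matroid polytopes cited just before the statement. First I would invoke Proposition~\ref{prop:numb} to replace $f(M)$ by $f(P_M)$. Then, by identifying the affine hull of $P_M$ (an affine subspace of $\R^n$ of dimension $n-c$) with $\R^{n-c}$ through an isometry---which preserves pairwise distances, hence diameters, and therefore leaves $f(P_M)$ unchanged---the Borsuk property of $P_M$ in $\R^{\dim(P_M)}$ reduces, by definition, to the inequality $f(P_M) \leq \dim(P_M) + 1$. Using $\dim(P_M) = n - c$, this is the same inequality as the one defining the Borsuk property of the matroid, namely $f(M) \leq n - c + 1$.

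The argument is essentially a tautology once the two ingredients (the equality $f(M) = f(P_M)$ and the dimension formula $\dim(P_M) = n - c$) are put side by side, so I do not anticipate a real obstacle. The one point I would take slight care with is the degenerate case in which $M$ has exactly one basis: by the convention set in the introduction, $f(M) = +\infty$, while $P_M$ reduces to a single point and thus admits no decomposition into parts of diameter strictly smaller than $\diame(P_M) = 0$, so $f(P_M) = +\infty$ as well and $\dim(P_M) = 0$; hence neither object has the Borsuk property and the equivalence is preserved. In all other cases, $\B(M)$ contains at least two bases and the equivalence follows directly from the two displayed inequalities above.
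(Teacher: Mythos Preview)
Your proposal is correct and follows exactly the approach the paper takes: the paper's proof is simply ``Immediate from the definitions and Proposition~\ref{prop:numb},'' and you have spelled out precisely what that immediacy consists of (the equality $f(M)=f(P_M)$ together with $\dim(P_M)=n-c$). Your explicit handling of the isometric identification with $\R^{n-c}$ and of the degenerate single-basis case are welcome clarifications but do not deviate from the paper's line of argument.
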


\begin{proof}
Immediate from the definitions and Proposition~\ref{prop:numb}.
\end{proof}

A consequence of Proposition~\ref{prop:comb-geo} is that any matroid with at least two bases and without the Borsuk property would provide a counter-example to the Borsuk conjecture. We emphasize that such a counter-example is not known yet.
 
\subsection{Background on the Borsuk conjecture}

Denote by $f(d)$ the smallest value such that every subset $S$ of $\R^d$ can be partitioned into at most $f(d)$ parts with diameter smaller than that of $S$. Borsuk's conjecture, which goes back to 1933, states that $f(d)=d+1$. The vertices of a regular $d$-simplex shows that $f(d)\geq d+1$ and it is known that $f(d)\leq d+1$ holds in several cases; see \cite{grunbaum1963borsuk,rauigorodskiui2004borsuk} and \cite[Chapter 31]{boltyanski1996excursions} for nice surveys. After sixty years, this outstanding conjecture was disproved by Kahn and Kalai~\cite{kahn1993counterexample} for $d=1325$ and for every $d \geq 2014$. The latter quantities have been improved and it is currently known to be false when $d \geq 64$ and true when $d \leq 3$. 

It is worth pointing out that all counter-examples on Borsuk's conjecture rely on purely combinatorial arguments. Erd\H{o}s~\cite{erdos1981my} and Larman~\cite{larman84} were among the first to suggest the construction of combinatorial counter-examples. All known non-trivial lower bounds for $f(d)$ have been obtained by considering two particular classes of integral polytopes: $(0,1)$-polytopes and $(0,1,-1)$-polytopes (or {\em cross-polytopes}). Estimations on the asymptotic behavior of $f(d)$ for both $(0,1)$-polytopes and cross-polytopes have been treated by Ra\u{\i}gorodski\u{\i}~\cite{Rai1,Rai4,raigorodskii2002borsuk}. By considering graph coloring problems and optimal binary codes, Ziegler~\cite[Theorem 1]{ziegler2001coloring} showed that every $(0,1)$-polytope of dimension $d \leq 9$ has the Borsuk property, i.e., can be decomposed into at most $d+1$ parts with diameter smaller than its diameter. Closely related results have also been obtained by Payan~\cite{payan1992chromatic}, Petersen~\cite{Peter}, and Schiller~\cite{Sch}.

\section{Notation and basic facts about matroids}\label{sec:matr}

We assume basic knowledge about matroids. In this section, we provide notation and properties of matroids that are maybe less known but that are important for our purpose. For proofs of these properties and complementary information about matroids, we refer the reader to the book by Oxley~\cite{oxley2011matroid}.

\subsection{Some notation}

The diameter of a set $X$, with respect to a suitable distance, is denoted by $\diame(X)$. The set of bases of a matroid $M$ is denoted by $\B(M)$. The {\em uniform matroid} whose bases are formed by all subsets of size $r$ of the ground set $[n]$ is denoted by $U_{r,n}$. The graphic matroid associated with a graph $G$ is denoted by $M(G)$.

\subsection{Connectivity and direct sum}
The {\em direct sum} of two matroids $M$ and $M'$, denoted by $M \oplus M'$, is the matroid whose ground set is the disjoint union of their ground sets and whose bases are the disjoint unions of their bases. The direct sum commutes with duality: $(M \oplus M')^\star = M^\star \oplus (M')^\star$ holds.

A matroid is {\em connected} if it cannot be written as the direct sum of two matroids. There is a unique way (up to isomorphism) to write a matroid $M$ as a direct sum of connected matroids. Each connected matroid in this sum is a {\em connected component} of $M$. Notice that by the commutation of direct sum and duality the number of connected components of a matroid is equal to the number of connected components of its dual.

\subsection{Series and parallel connections}
Consider two matroids $M$ and $M'$ that share exactly one element $p$. Denoting by $E$ and $E'$ the ground sets of respectively $M$ and $M'$, we have thus $E \cap E' = \{p\}$. Under extra condition on $p$, we can define two operations---the series connection and the parallel connection---that form a new matroid from $M$ and $M'$.

Assume that $p$ is a coloop of neither $M$ nor $M'$. Then the {\em series connection} of $M$ and $M'$, denoted by $\Ser(M,M')$, is the matroid with $E \cup E'$ as ground set and with \[
\bigl\{B \cup B' \colon B \in \B(M),\, B' \in \B(M), \, B \cap B' = \varnothing \bigl\}
\]as set of bases.

Assume that $p$ is a loop of neither $M$ nor $M'$. Then the {\em parallel connection} of $M$ and $M'$, denoted by $\Par(M,M')$, is the matroid with $E \cup E'$ as ground set and with 
\[
\begin{array}{l}
\bigl\{B \cup B' \colon B \in \B(M),\, B' \in \B(M'), \, B \cap B' = \{p\} \bigl\} \\[1.5ex] \cup \,\, \bigl\{(B \cup B') \setminus \{p\} \colon B \in \B(M),\, B' \in \B(M'), \, p \in (B \setminus B') \cup (B' \setminus B)\bigl\}
\end{array}
\] as set of bases.

It is easy to check that these two operations are dual of each other: the series connection of two matroids is the parallel connection of their duals, and vice versa. Formally, this writes
\begin{equation}\label{eq:ser-par}
\bigl(\Ser(M,M')\bigl)^\star = \Par(M^\star,(M')^\star) \, . 
\end{equation} Moreover, if $M$ and $M'$ are connected, then their series connection and their parallel connection are both connected as well~\cite[Proposition 7.1.17]{oxley2011matroid}.

\section{Preliminaries}\label{sec:prel}

In this section, we establish a few results about matroids, their Borsuk numbers, Kneser graphs, and related facts that will be useful for proving our main results. We also believe that some of them are of independent interest.

\subsection{Preliminaries on the Borsuk number}

\subsubsection{Duality}

The main operations involved in the definition of the Borsuk number and the Borsuk property (number of elements, distance, number of connected components, etc.) are invariant when we move to the dual. Hence, when it comes to the Borsuk number and the Borsuk property, a matroid behaves the same as its dual. The latter is formalized in the following proposition, whose proof is immediate---by remarking that the distance between two bases is equal to the distance between the dual bases---and thus omitted.

\begin{proposition}\label{prop:dual}
For every matroid $M$, the equality $f(M) = f(M^\star)$ holds. Moreover, $M$ has the Borsuk property if and only if $M^\star$ has the Borsuk property.
\end{proposition}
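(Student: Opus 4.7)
The plan is to exhibit a canonical isometry between $\B(M)$ and $\B(M^\star)$ induced by set complementation, from which both claims will drop out. Let $E$ be the ground set of $M$. I would begin by recalling the standard fact that $B \in \B(M)$ if and only if $E \setminus B \in \B(M^\star)$, so the map $\varphi : B \mapsto E \setminus B$ is a bijection $\B(M) \to \B(M^\star)$.

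Next I would verify that $\varphi$ preserves the Hamming-type distance defined in the introduction. This rests on the elementary set-theoretic identity
\[
(E \setminus B) \triangle (E \setminus B') = B \triangle B',
\]
valid for any $B, B' \subseteq E$, since an element of $E$ lies in exactly one of $E\setminus B, E\setminus B'$ if and only if it lies in exactly one of $B, B'$. Thus $|B \triangle B'| = |\varphi(B) \triangle \varphi(B')|$, so $\varphi$ is an isometry.

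From the isometry, any partition of $\B(M)$ transfers (via $\varphi$) to a partition of $\B(M^\star)$ with the same multiset of part diameters, and vice versa. In particular, $\diame(\B(M)) = \diame(\B(M^\star))$, and a partition of $\B(M)$ into $k$ parts of diameter strictly smaller than $\diame(\B(M))$ exists if and only if such a partition of $\B(M^\star)$ exists. Taking $k$ minimal yields $f(M) = f(M^\star)$.

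Finally, for the Borsuk property, I would recall from Section~\ref{sec:matr} that $M$ and $M^\star$ share the same ground set (hence the same $n$) and the same number of connected components $c$. The threshold $n - c + 1$ defining the Borsuk property is therefore identical for $M$ and $M^\star$, so the equality $f(M) = f(M^\star)$ immediately gives the stated equivalence. There is really no obstacle here: once the complementation bijection is seen to be an isometry, everything follows formally, which is presumably why the authors omit the proof.
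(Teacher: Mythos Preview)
Your proof is correct and follows exactly the approach the paper alludes to when it says the result is immediate ``by remarking that the distance between two bases is equal to the distance between the dual bases.'' You have simply spelled out the details of the complementation isometry and the invariance of $n$ and $c$ under duality that the authors considered obvious enough to omit.
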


\subsubsection{Direct sum and connectivity}

The next proposition shows that to investigate upper bounds on the Borsuk number, it is relevant to start with connected matroids.

\begin{proposition}\label{prop:conn}
For every two matroids $M$ and $M'$, we have \[
f(M \oplus M') \le \min\bigl(f(M),f(M')\bigl) \, .
\] 
\end{proposition}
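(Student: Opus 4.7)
The plan is to exploit the product structure of bases of a direct sum. Recall that $\B(M \oplus M')$ consists precisely of the disjoint unions $B \sqcup B'$ with $B \in \B(M)$ and $B' \in \B(M')$, and that for any two such bases, the Hamming distance splits additively: $|(B_1 \sqcup B_1') \triangle (B_2 \sqcup B_2')| = |B_1 \triangle B_2| + |B_1' \triangle B_2'|$, because the two ground sets are disjoint. Taking the maximum on each side independently, this immediately gives $\diame(\B(M \oplus M')) = \diame(\B(M)) + \diame(\B(M'))$.

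The main step is then a ``lifting'' argument. Without loss of generality assume $f(M) \leq f(M')$. If $f(M) = +\infty$, there is nothing to prove, so suppose $f(M)$ is finite and pick a partition $\B(M) = P_1 \sqcup \cdots \sqcup P_k$ with $k = f(M)$ and $\diame(P_i) < \diame(\B(M))$ for every $i$. Define
\[
Q_i \coloneqq \bigl\{B \sqcup B' \colon B \in P_i,\ B' \in \B(M')\bigl\}
\]
for each $i \in [k]$. The $Q_i$ partition $\B(M \oplus M')$, and by the additivity of the distance,
\[
\diame(Q_i) \leq \diame(P_i) + \diame(\B(M')) < \diame(\B(M)) + \diame(\B(M')) = \diame(\B(M \oplus M'))\, .
\]
Hence $f(M \oplus M') \leq k = f(M)$, and by symmetry (or by swapping the roles of $M$ and $M'$) the same bound holds with $f(M')$, giving $f(M \oplus M') \leq \min(f(M), f(M'))$.

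The only subtle point is the edge case where $M$ or $M'$ has a single basis (so its $f$-value is $+\infty$, and in particular the strict inequality $\diame(P_i) < \diame(\B(M))$ would be vacuous). In that case the $\min$ is controlled by the other matroid and the argument above still applies, since the single basis contributes $0$ to every distance; if both matroids have a single basis, then so does $M \oplus M'$ and the inequality $+\infty \leq +\infty$ is trivial. There is no real obstacle here; the proof is essentially a direct unpacking of the definitions, and the only thing worth being careful about is preserving strict inequality under the lift, which is automatic because $\diame(\B(M'))$ is finite whenever $\B(M')$ is nonempty.
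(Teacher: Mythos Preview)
Your proof is correct and follows essentially the same approach as the paper: lift an optimal partition of $\B(M)$ to $\B(M\oplus M')$ by taking products with $\B(M')$, and use the additive splitting of the Hamming distance together with $\diame(\B(M\oplus M'))=\diame(\B(M))+\diame(\B(M'))$ to conclude. The only cosmetic difference is that you bound $\diame(Q_i)$ directly, whereas the paper argues that any diameter-realizing pair must land in different parts; these are equivalent.
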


\begin{proof}
Since the matroids $M$ and $M'$ are arbitrary, it will be sufficient to show that $f(M \oplus M')\le f(M)$. Consider therefore a partition $\B_1,\B_2,\ldots,\B_k$ of $\B(M)$ into $k$ parts of diameter smaller than $\diame(\B(M))$. The partition $\B_1,\B_2,\ldots,\B_k$ induces a partition $\overline{\B}_1,\overline{\B}_2,\ldots,\overline{\B}_k$ of $M \oplus M'$ by setting $\overline{\B}_i$ to be the bases of $M \oplus M'$ of the form $B \cup B'$ with $B \in \B_i$ and $B' \in \B(M')$. 

We finish the proof by showing that no $\overline{\B}_i$ contain two bases of $M \oplus M'$ at distance $\diame(\B(M \oplus M'))$. Take two bases $B_1 \cup B_1'$ and $B_2 \cup B_2'$ of $M \oplus M'$ at distance $\diame(\B(M \oplus M'))$. We have
\[
\diame(\B(M \oplus M')) = |B_1 \triangle B_2| + |B_1' \triangle B_2'| \leq |B_1 \triangle B_2| + \diame(\B(M')) \, .
\]
Since $\diame(\B(M \oplus M')) = \diame(\B(M)) + \diame(\B(M'))$, the two bases $B_1$ and $B_2$ of $M$ are at distance $\diame(\B(M))$, which means that they cannot belong to the same $\B_i$, and therefore $B_1 \cup B_1'$ and $B_2 \cup B_2'$ cannot belong to the same $\overline{\B}_i$.
\end{proof}

We believe that the inequality in Proposition~\ref{prop:conn} is actually an equality. This is related to the celebrated Hedetniemi conjecture (now refuted); see Section~\ref{sec:open}.

The following corollary shows that to find matroids failing to have the Borsuk property, it is enough to focus on connected matroids.

\begin{corollary}\label{cor:conn}
Given two matroids, if one of them at least has the Borsuk property, then their direct sum has the Borsuk property as well.
\end{corollary}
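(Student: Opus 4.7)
The plan is to combine Proposition~\ref{prop:conn} with the additivity of both the ground set size and the number of connected components under the direct sum, together with the trivial bound that every connected component contains at least one element.

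More concretely, let $M$ and $M'$ be matroids with ground sets of respective sizes $n$ and $n'$, and respective numbers of connected components $c$ and $c'$. Since the connected components of $M \oplus M'$ are exactly the (disjoint) union of the connected components of $M$ and $M'$, the matroid $M \oplus M'$ has $n+n'$ elements and $c+c'$ connected components. Without loss of generality, assume $M$ has the Borsuk property, so in particular $M$ has at least two bases and $f(M) \leq n - c + 1$. Applying Proposition~\ref{prop:conn} gives
\[
f(M \oplus M') \leq f(M) \leq n - c + 1 \leq (n + n') - (c + c') + 1,
\]
where the last inequality uses $n' \geq c'$, which holds because every connected component of a matroid contains at least one element. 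This is exactly the inequality required to conclude that $M \oplus M'$ has the Borsuk property.

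There is no genuine obstacle here: the corollary is essentially a bookkeeping consequence of Proposition~\ref{prop:conn}. The only mild point to verify is the harmless estimate $n' \geq c'$, which ensures that the dimension increase $n' - c'$ of the ambient affine hull of the matroid polytope under the direct sum never outruns the Borsuk bound inherited from $M$. Note that it does not matter whether $M'$ itself has the Borsuk property (or even has more than one basis): the slack coming from the fact that $M$ already satisfies the Borsuk inequality, combined with the fact that adding $M'$ increases the allowed number of parts by $n' - c'$ while not increasing the number of parts actually needed, is sufficient.
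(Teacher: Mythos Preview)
Your proof is correct and follows essentially the same approach as the paper's own proof: apply Proposition~\ref{prop:conn} to get $f(M\oplus M')\leq f(M)\leq n-c+1$, then use the trivial bound $c'\leq n'$ to conclude $f(M\oplus M')\leq (n+n')-(c+c')+1$.
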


\begin{proof}
Consider two matroids $M$ and $M'$, and suppose that $M$ has the Borsuk property. Denote by $n$ (resp.\,$n'$) the number of elements of $M$ (resp.\,$M'$) and by $c$ (resp.\,$c'$) its number of connected compontents. Proposition~\ref{prop:conn} implies that $f(M\oplus M') \leq n - c + 1$. Since the number of connected components in a matroid is at most the number of elements, we have $c' \leq n'$ and hence $f(M \oplus M') \leq n + n' - c - c' + 1$, as desired.
\end{proof}

\subsubsection{Series connection}

We end this section with two propositions about the Borsuk number and series connection.

\begin{proposition}\label{prop:ser}
For every two matroids $M$ and $M'$, we have \[
f\bigl(\Ser(M,M')\bigr) \leq f(M) + f(M') \, .
\] Moreover, if $M$ has two disjoint bases that do not contain $p$ (the element used for the series connection), then
\[
f\bigl(\Ser(M,M')\bigr) \leq \min\bigl(f(M),f(M')\bigr) \, .
\]
\end{proposition}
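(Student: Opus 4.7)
My plan is to build an explicit partition of $\B(\Ser(M,M'))$ into $k+\ell$ parts (with $k:=f(M)$, $\ell:=f(M')$) of diameter strictly less than $D:=\diame(\B(\Ser(M,M')))$, and then, under the extra disjointness hypothesis, cut this down to only $k$ (and, symmetrically, $\ell$) parts. The starting observation is that every basis $X$ of $\Ser(M,M')$ admits a unique decomposition $X=B(X)\cup B'(X)$ with $B(X)\in\B(M)$, $B'(X)\in\B(M')$, $B(X)\cap B'(X)=\varnothing$: comparing $|X\cap E|$ with $r(M)$ determines on which side $p$ sits when $p\in X$. A short case analysis on whether $p$ belongs to $B(X_i)$ or $B'(X_i)$ for $i=1,2$ then yields
\[
|X_1\triangle X_2|=|B(X_1)\triangle B(X_2)|+|B'(X_1)\triangle B'(X_2)|-2\varepsilon,
\]
where $\varepsilon\in\{0,1\}$ equals $1$ exactly when $p\in B(X_1)\cap B'(X_2)$ or $p\in B'(X_1)\cap B(X_2)$ (the ``bad pattern''). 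Consequently $D\le\Delta+\Delta'$, with $\Delta:=\diame(\B(M))$ and $\Delta':=\diame(\B(M'))$, but equality may fail.

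Fix optimal partitions $\B(M)=\bigsqcup_{i=1}^k\B_i$ and $\B(M')=\bigsqcup_{j=1}^\ell\B'_j$, and set
\[
P_i:=\{X:p\notin B'(X),\;B(X)\in\B_i\},\qquad Q_j:=\{X:p\in B'(X),\;B'(X)\in\B'_j\},
\]
which yield a partition of $\B(\Ser(M,M'))$ into $k+\ell$ parts. The key auxiliary bound is $D\ge\Delta+\Delta'_{*}$, where $\Delta'_{*}$ denotes the maximum distance between two bases of $M'$ avoiding $p$ (set to $0$ when there is only one such basis; this set is nonempty because $p$ is not a coloop of $M'$). Indeed, taking $B_1,B_2\in\B(M)$ at distance $\Delta$ and $B'_1,B'_2$ bases of $M'$ avoiding $p$ at distance $\Delta'_{*}$, the unions $X_i:=B_i\cup B'_i$ are valid bases (their intersection is empty since $p\notin B'_i$) and form no bad pattern (neither has $p$ on the $B'$-side), so $|X_1\triangle X_2|=\Delta+\Delta'_{*}$. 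Inside any $P_i$, one has $B(X_1),B(X_2)\in\B_i$ (distance $\le\Delta-1$), both $B'(X_1),B'(X_2)$ avoid $p$ (distance $\le\Delta'_{*}$), and no bad pattern, hence
\[
|X_1\triangle X_2|\le(\Delta-1)+\Delta'_{*}\le(\Delta-1)+(D-\Delta)=D-1.
\]
The symmetric argument, using $D\ge\Delta^{*}_{M}+\Delta'$ with $\Delta^{*}_{M}$ the maximum distance between two bases of $M$ avoiding $p$, controls every $Q_j$ and completes the first inequality.

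For the ``moreover'' part, two disjoint bases of $M$ avoiding $p$ force $\Delta^{*}_{M}=2r(M)=\Delta$, so the symmetric auxiliary bound upgrades to $D\ge\Delta+\Delta'$; combined with $D\le\Delta+\Delta'$ this gives $D=\Delta+\Delta'$. Under this equality the coarser partition $R_i:=\{X:B(X)\in\B_i\}$ (into only $k$ parts) already suffices: within $R_i$ a bad pattern would force $|B(X_1)\triangle B(X_2)|=\Delta$, which is ruled out by $B(X_1),B(X_2)\in\B_i$, so every intraclass distance is at most $(\Delta-1)+\Delta'=D-1$. Swapping the roles of $M$ and $M'$ yields the analogous $\ell$-partition, whence $f(\Ser(M,M'))\le\min(f(M),f(M'))$. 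The main obstacle throughout is the bad-pattern correction, which makes the naive upper bound $\Delta+\Delta'$ potentially loose as a diameter; the fix is to introduce the quantities $\Delta'_{*}$ and $\Delta^{*}_{M}$, which provide matching lower bounds on $D$ that exactly cancel the ``$-1$'' loss inside each part.
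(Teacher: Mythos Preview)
Your argument is correct and follows essentially the same route as the paper: the identical $(k+\ell)$-partition according to whether $p$ sits on the $M'$-side, and the same coarsening to a single $k$- (resp.\ $\ell$-) part partition once $D=\Delta+\Delta'$ is secured. Your version is somewhat more explicit---isolating the ``bad-pattern'' correction $-2\varepsilon$ and the auxiliary diameters $\Delta'_{*},\Delta^{*}_{M}$---where the paper argues more tersely that a diametral pair ``could otherwise be made larger''; the one sentence claiming that a bad pattern inside $R_i$ would force $|B(X_1)\triangle B(X_2)|=\Delta$ is slightly garbled but also superfluous, since the bound $(\Delta-1)+\Delta'=D-1$ already holds regardless of $\varepsilon$.
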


\begin{proof}
Let $\B_1,\B_2,\ldots,\B_k$ be a partition of $\B(M)$ into $k=f(M)$ parts of diameter smaller than $\diame(\B(M))$, and $\B'_1,\B'_2,\ldots,\B'_{k'}$ be a partition of $\B(M')$ into $k'=f(M')$ parts of diameter smaller than $\diame(\B(M'))$. \\

[First inequality] We build a partition $\overline{\B}_1,\overline{\B}_2,\ldots,\overline{\B}_{k+k'}$ of $\B(\Ser(M,M'))$ as follows. Consider a basis $B \cup B'$ of $\Ser(M,M')$. If $B'$ does not contain $p$, then put $B \cup B'$ in $\overline{\B}_i$ with $i$ such that $B \in \B_i$. If $B'$ contains $p$, then put $B \cup B'$ in $\overline{\B}_{k+i}$ with $i$ such that $B' \in \B'_i$. We check that no part $\overline{\B}_i$ contains two bases of $\Ser(M,M')$ at distance $\diame\bigl(\B(\Ser(M,M'))\bigr)$ apart. Consider two bases $B_1 \cup B_1'$ and $B_2 \cup B_2'$ of $\Ser(M,M')$ that are at distance $\diame\bigl(\B(\Ser(M,M'))\bigr)$ apart. We distinguish three cases.

Case 1) $p \notin B_1' \cup B_2'$. Then $B_1$ and $B_2$ are at distance $\diame(\B(M))$ apart (since otherwise we could increase the distance between the two bases $B_1 \cup B_1'$ and $B_2 \cup B_2'$). Thus $B_1$ and $B_2$ do not belong to the same $\B_i$, and $B_1 \cup B_1'$ and $B_2 \cup B_2'$ do not belong to the same $\overline{\B}_i$.

Case 2) $p \in B_1' \triangle B_2'$. Without loss of generality, we have $p \in B_1' \setminus B_2'$. Then $B_1 \cup B_1'$ belongs to some $\overline{\B}_i$ with $i > k$, and $B_2 \cup B_2'$ belongs to some $\overline{\B}_i$ with $i \leq k$.

Case 3) $p \in B_1' \cap B_2'$. Then $B_1'$ and $B_2'$ are at distance $\diame(\B(M'))$ apart (since otherwise we could increase the distance between the two bases $B_1 \cup B_1'$ and $B_2 \cup B_2'$). Thus $B_1'$ and $B_2'$ do not belong to the same $\B'_i$, and $B_1 \cup B_1'$ and $B_2 \cup B_2'$ do not belong to the same $\overline{\B}_i$.\\

[Second inequality] We build a partition $\overline{\B}_1,\overline{\B}_2,\ldots,\overline{\B}_{\min(k,k')}$ of $\B(\Ser(M,M'))$ as follows.

Consider a basis $B \cup B'$ of $\Ser(M,M')$. If $k \leq k'$, put $B \cup B'$ in $\overline{\B}_i$ with $i$ such that $B \in \B_i$. Otherwise, put $B \cup B'$ in $\overline{\B}_i$ with $i$ such that $B' \in \B'_i$. We check that no part $\overline{\B}_i$ contains two bases of $\Ser(M,M')$ at distance $\diame\bigl(\B(\Ser(M,M'))\bigr)$ apart. Consider two bases $B_1 \cup B_1'$ and $B_2\cup B_2'$ of $\Ser(M,M')$ that are at distance $\diame\bigl(\B(\Ser(M,M'))\bigr)$ apart. Notice that if $M$ contains two disjoint bases that do not contain $p$, then $\diame\bigl(\B(\Ser(M,M'))\bigr) = \diame(\B(M)) + \diame(\B(M'))$. This implies that $B_1$ and $B_2$ are necessarily at distance $\diame(\B(M))$ apart and that $B_1'$ and $B_2'$ are necessarily at distance $\diame(\B(M'))$ apart. Therefore, the bases $B_1 \cup B_1'$ and $B_2 \cup B_2'$ cannot belong to the same $\overline{\B}_i$.
\end{proof}

\subsection{Preliminaries on Kneser graphs and matroids}

We introduce the {\em matroid Kneser graph} associated with a matroid $M$, denoted by $\KG(M)$, as the graph with
vertices being the bases of $M$ and in which two vertices are joined by an edge if the intersection of the
corresponding bases is empty. (These graphs are actually special cases of the ``generalized Kneser graphs'' introduced by Dol'nikov~\cite{dol1988certain}.) The classical {\em Kneser graph} $\KG(n, r)$ is the particular case when
$M$ is $U_{r,n}$. These matroid Kneser graphs appear naturally in the proof of Theorem~\ref{thm:main}.

The key connection between matroid Kneser graphs and the Borsuk property when the matroid does not have the strong basis intersection property is given by the following.

\begin{lemma}\label{lem:kneser}
Consider a matroid $M$ with two disjoint bases. Then $\chi(\KG(M)) = f(M)$.
\end{lemma}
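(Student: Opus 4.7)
The plan is to observe that the hypothesis ``$M$ has two disjoint bases'' pins down the diameter of $\B(M)$ exactly, and then to translate the Borsuk partition problem into a graph coloring problem on $\KG(M)$ in a completely tautological way.

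First, I would recall that all bases of $M$ have the same size $r$ (the rank), so for any two bases $B, B' \in \B(M)$,
\[
|B \triangle B'| = |B| + |B'| - 2|B \cap B'| = 2r - 2|B \cap B'| \leq 2r,
\]
with equality if and only if $B \cap B' = \varnothing$. In particular, if $M$ admits two disjoint bases, then $\diame(\B(M)) = 2r$, and a pair of bases realizes the diameter if and only if it corresponds to an edge of $\KG(M)$.

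Next, I would interpret what a Borsuk-feasible partition means in this setting. A subset $\B' \subseteq \B(M)$ has diameter strictly less than $\diame(\B(M)) = 2r$ precisely when it contains no pair of disjoint bases, i.e., precisely when $\B'$ is an independent set in $\KG(M)$. Consequently, a partition of $\B(M)$ into $k$ parts, each of diameter smaller than $\diame(\B(M))$, is the same thing as a proper coloring of $\KG(M)$ with $k$ colors, and conversely any such coloring induces such a partition. Taking the minimum on both sides yields $f(M) = \chi(\KG(M))$.

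There is essentially no obstacle here beyond bookkeeping: the only place the hypothesis is genuinely used is to guarantee that the diameter is exactly $2r$, so that ``diameter smaller than $\diame(\B(M))$'' becomes the clean combinatorial condition ``no two bases disjoint.'' Without that hypothesis, the diameter would be strictly less than $2r$ and the equivalence with coloring $\KG(M)$ would break down (the Borsuk condition would correspond instead to coloring a different, denser generalized Kneser graph).
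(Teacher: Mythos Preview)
Your proposal is correct and follows essentially the same argument as the paper: both use the hypothesis to identify $\diame(\B(M)) = 2r$, then note that diameter-realizing pairs are exactly the edges of $\KG(M)$, so Borsuk partitions and proper colorings of $\KG(M)$ coincide. Your write-up is slightly more explicit (via the formula $|B \triangle B'| = 2r - 2|B\cap B'|$), but there is no substantive difference.
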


\begin{proof}
Since $M$ has two disjoint bases, the diameter of $\B(M)$ is equal to $2\rk(M)$, attained on pairs of disjoint bases. Consider a proper coloring of $\KG(M)$ with $k$ colors. Each color class contains only vertices associated with pairwise intersecting bases and is therefore of diameter smaller than $2\rk(M)$. Conversely, suppose there exists a partition of $\B(M)$ into $k$ parts of diameter smaller than $2\rk(M)$. None of these parts can contain two  disjoint bases. Using one color for each part provides a proper coloring of $\KG(M)$ with $k$ colors.
\end{proof}

Lemma~\ref{lem:kneser} shows that computing $f(M)$ for a matroid $M$ with two disjoint bases is equivalent to determining the chromatic number of $\KG(M)$. This motivates the determination of general upper bounds on $\KG(M)$.

\begin{proposition}\label{prop:upper}
Consider a matroid $M$ with $n$ elements. Then the following two properties hold.
\begin{enumerate}[label=\textup{(\roman*)}]
\item \label{coc} $\chi(\KG(M)) \leq \min_{C^\star\in\calC^\star(M)}|C^\star|$.\smallskip
\item \label{knes} If $n \geq 2 \rk(M) - 1$, then $\chi(\KG(M)) \leq  n - 2 \rk(M) +2$.
\end{enumerate}
\end{proposition}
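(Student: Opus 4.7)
The plan is to exhibit, in each case, an explicit proper coloring of $\KG(M)$; both colorings are pigeonhole arguments, so the proof will be essentially constructive.

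For \ref{coc}, for any cocircuit $C^\star$ of $M$ I will construct a proper coloring of $\KG(M)$ using at most $|C^\star|$ colors; taking the minimum over all cocircuits then yields \ref{coc}. The construction relies on the standard matroid property that every basis meets every cocircuit (the complement of a cocircuit is a hyperplane, which by definition contains no basis). I would define a map $\phi \colon \B(M) \to C^\star$ by letting $\phi(B)$ be, say, the smallest element of $B \cap C^\star$ with respect to a fixed total order on the ground set. If $\phi(B_1) = \phi(B_2) = e$, then both $B_1$ and $B_2$ contain $e$, so the two bases are non-disjoint and hence non-adjacent in $\KG(M)$. Thus $\phi$ is a proper coloring of $\KG(M)$ using at most $|C^\star|$ colors.

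For \ref{knes}, I would adapt the classical combinatorial coloring of the Kneser graph $\KG(n,r)$. Fix a total order $e_1 < e_2 < \cdots < e_n$ on the ground set. For $i \in \{1, \ldots, n - 2\rk(M) + 1\}$, let $A_i$ be the set of bases whose smallest element (in this order) is $e_i$, and let $A_{n - 2\rk(M) + 2}$ gather the remaining bases, namely those whose smallest element is $e_j$ for some $j \geq n - 2\rk(M) + 2$. Every basis falls into exactly one class. For $i \leq n - 2\rk(M) + 1$, all bases in $A_i$ share $e_i$. Each basis in the last class is contained in $\{e_{n - 2\rk(M) + 2}, \ldots, e_n\}$, a set of cardinality $2\rk(M) - 1$, so any two $\rk(M)$-subsets of it intersect by pigeonhole. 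Hence no class contains a pair of disjoint bases, and this produces a proper coloring of $\KG(M)$ with $n - 2\rk(M) + 2$ colors. The hypothesis $n \geq 2\rk(M) - 1$ is exactly what is needed to ensure that this number of colors is at least $1$ and that the construction is well-defined.

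I do not foresee a real obstacle. The only ingredients are the basis-cocircuit intersection property (for \ref{coc}) and the pigeonhole partition (for \ref{knes}); the only points requiring care are to verify that $B \cap C^\star$ is non-empty for every basis $B$, and that the last color class in \ref{knes} is genuinely an independent set in $\KG(M)$.
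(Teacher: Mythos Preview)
Your proposal is correct and follows essentially the same approach as the paper. For~\ref{coc} the paper also colors each basis by an element of $B\cap C^\star$, invoking the basis--cocircuit intersection property; for~\ref{knes} the paper simply appeals to the classical upper bound on $\chi(\KG(n,r))$ (noting that $\KG(M)$ is a subgraph of $\KG(n,\rk(M))$), while you spell out the standard coloring explicitly, which amounts to the same argument.
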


The upper bound~\ref{knes} is nothing else than the classical upper bound on the chromatic number of Kneser graphs; see, e.g.,~\cite{lovasz1978kneser}.

\begin{proof}[Proof of Proposition~\ref{prop:upper}]
As explained just above, item~\ref{knes} is a consequence of classical results about Kneser graphs. Let us show item~\ref{coc}. Consider an arbitrary cocircuit $C^\star$. Color each basis with an arbitrary element from $B \cap C^\star$. Such an element exists since every basis and every cocircuit intersect in a matroid. This induces a coloring of $\KG(M)$. This coloring is proper since two disjoint bases are necessarily colored with distinct elements from $C^\star$.
\end{proof}

Proposition~\ref{prop:upper} does not provide tight upper bounds in general. For instance, consider the graph $G$ of Figure~\ref{fig:graphic}. Clearly, $\chi(\KG(M(G)))=1$ holds since there are two adjacent degree-two vertices and every two bases (spanning trees) intersect
On the other hand, item~\ref{coc} provides an upper bound of $2$ (there is no bridge in $G$, and any two edges incident to a degree-two vertex is a cocircuit). Item~\ref{knes} provides an upper bound larger than $2$, which is not tight either. The construction of Section~\ref{subsec:graphic} shows that this example can actually made arbitrarily large. (The chromatic number of Kneser graphs associated woth graphic matroids has been studied by Alishahi and Hajiabolhassan~\cite{alishahi2019chromatic}, who provide its exact value for many cases.)

We note however that~\ref{coc} provides better upper bounds than~\ref{knes}  in infinitely many cases. Consider for instance a rank-$3$ matroid $M$ with $n$ elements. If $M$ admits a line (hyperplane) with at least $h\geq 5 $ elements, then $\chi(\KG(M)) \le n-5$ by~\ref{coc}, while the upper bound~\ref{knes} is $n-4$ in that case. An example where this occurs is given by the matroid $V_h$ defined as follows: its bases are the triples of points that are not aligned in Figure~\ref{fig:vh}. 
In the next subsection, we will see rank-$3$ matroids where the bound~\ref{knes} is tight.

\begin{figure}
\begin{tikzpicture}[scale=0.6]

% Définir les points initiaux
\coordinate (P1) at (0,0);
\coordinate (P2) at (1,1);
\coordinate (P3) at (2,2);
\coordinate (P4) at (3,3);
\coordinate (P5) at (4,4);
\coordinate (P6) at (4.8,4.8);
\coordinate (P7) at (6,6);
\coordinate (P22) at (-1,1);
\coordinate (P33) at (-2,2);
\coordinate (P44) at (-3,3);
\coordinate (P55) at (-4,4);
\coordinate (P66) at (-4.8,4.8);
\coordinate (P77) at (-6,6);

  \tikzset{vertex/.style={circle, fill=black, draw=black, minimum size=4pt, inner sep=0pt}}
  \node[vertex] (v1) at (P1) {};
  \node[vertex] (v2) at (P2) {};
  \node[vertex] (v3) at (P3) {};
  \node[vertex] (v4) at (P4) {};
  \node[vertex] (v5) at (P5) {};
  \node[vertex] (v22) at (P22) {};
  \node[vertex] (v33) at (P33) {};
  \node[vertex] (v44) at (P44) {};
  \node[vertex] (v55) at (P55) {};

\draw (P1) -- (P6);
\draw[dashed] (P6) -- (P7);
\draw (P1) -- (P66) ;
\draw[dashed] (P66) -- (P77);

\end{tikzpicture}
\caption{On each ``branch,'' there are $h$ points, not counting the point on the intersection. The triples of points that are not aligned form the bases of the matroid $V_h$.\label{fig:vh}}
\end{figure}
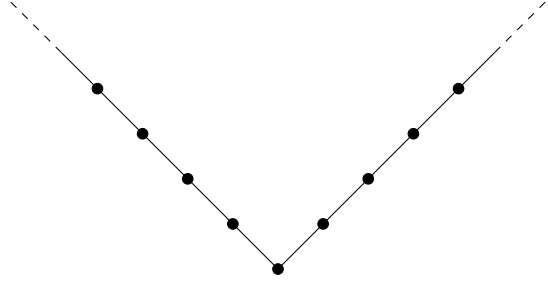

%\begin{figure}[htb]
%\begin{center}
% \includegraphics[width=.15\textwidth]{images/exampl1.pdf}
% \caption{Graph $H$.}\label{fig:3}
% \end{center}
%\end{figure}

\subsection{Exploring further the chromatic number of matroid Kneser graphs}

Lemma~\ref{lem:kneser} shows that the chromatic number of the Kneser graph built from the bases of a matroid is relevant to study its Borsuk number. In this section, we compute this chromatic number for some classical matroids. Actually, since they all contain Schrijver graphs as induced subgraphs, this computation is immediate. We recall that the {\em Schrijver graph} $\SG(n,r)$ is the subgraph of $\KG(n,r)$ induced by the ``stable'' bases of $U_{r,n}$. (A basis $B$ is {\em stable} if it does not contain two cyclically consecutive elements from $[n]$, i.e., if $i$ and $j$ are distinct elements from $B$, then $2 \leq |i-j| \leq n-2$.) It is known~\cite{schrijver1978vertex} that $\chi(\SG(n,r)) = \chi(\KG(n,r))$.

The bases of the {\em Fano matroid} $F_7$ are the triples of points that are not aligned in Figure~\ref{F7}. Similarly, the bases of the {\em non-Pappus matroid} are the triples of points that are not aligned in Figure~\ref{non-Pappus}.

\begin{proposition}\label{prop:pappus} 
We have $\chi(\KG(F_7)) = 3$ and $\chi(\KG(\textup{non-Pappus matroid})) = 5$.
\end{proposition}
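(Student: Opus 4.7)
For each of the two matroids, the plan is to sandwich $\chi(\KG(M))$ between matching upper and lower bounds. The bases of any rank-$r$ matroid on $[n]$ lie among the bases of $U_{r,n}$, and two bases of $M$ are adjacent in $\KG(M)$ exactly when they are disjoint, which is precisely the adjacency in $\KG(n,r)$; hence $\KG(M)$ is an induced subgraph of $\KG(n,r)$, and Proposition~\ref{prop:upper}\ref{knes} yields $\chi(\KG(M)) \le n-2r+2$. This gives an upper bound of $3$ for $F_7$ (with $n=7$, $r=3$) and of $5$ for the non-Pappus matroid ($n=9$, $r=3$).

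For the matching lower bounds, the plan is to label the ground set so that every $3$-point line of the configuration contains two cyclically consecutive labels, i.e.\ is non-stable. Such a labeling makes every stable triple a basis of $M$; since the adjacency of the Schrijver graph $\SG(n,r)$ is again just disjointness of stable triples, $\SG(n,r)$ appears as an induced subgraph of $\KG(M)$, so
\[
\chi(\KG(M)) \;\ge\; \chi(\SG(n,r)) \;=\; \chi(\KG(n,r)) \;=\; n-2r+2.
\]

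For $F_7$ a suitable labeling is provided by the Singer difference-set construction: the seven triples $\{i,\,i+1,\,i+3\} \pmod 7$, $i\in[7]$, form a Fano plane in which every line obviously contains the consecutive pair $\{i, i+1\}$. For the non-Pappus matroid the task reduces to finding a Hamilton cycle on the $9$ points whose edges each lie on some line and which collectively meet all $8$ lines of the configuration; a relabelling then turns this cycle into $1-2-\cdots-9-1$. Exhibiting such a cycle is the one genuinely nontrivial step and is the main obstacle. The three points that belong to only two lines in the non-Pappus matroid (those lying on the suppressed Pappus line) each force two cycle-edges, one on each of their two incident lines, pinning down six of the nine cycle-edges together with six of the eight lines; a finite case analysis then selects the remaining three edges from the two ``axial'' triples so as to close into a single $9$-cycle and hit the final two lines. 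Combining the two bounds yields the claimed equalities.
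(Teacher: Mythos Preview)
Your approach is essentially the paper's: sandwich $\KG(M)$ between $\SG(n,3)$ and $\KG(n,3)$ via a labeling that makes every line non-stable. For $F_7$ your Singer difference-set labeling is a clean explicit witness; the paper instead gives the labeling directly in Figure~\ref{F7}. For the non-Pappus matroid the paper likewise just exhibits a labeling (Figure~\ref{non-Pappus}) and says the check is immediate, whereas you describe a search for a suitable Hamilton cycle; your extra requirement that \emph{every} cycle edge lie on some line is not needed (only that each line contain a cycle edge), but it is harmless and is in fact satisfied by the paper's labeling. The only thing missing from your plan is the explicit non-Pappus labeling itself, but the case analysis you outline does lead to one.
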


\begin{proof}
The graphs $\KG(F_7)$ and $\KG(\text{non-Pappus matroid})$ are respectively subgraphs of $\KG(7,3)$ and $\KG(9,3)$. This  shows that $3$ and $5$ are respective upper bounds on their chromatic numbers. With the labeling of Figure~\ref{fig:2}, it is immediate to check that $\SG(7, 3)$ and $\SG(9, 3)$ are subgraphs of respectively $\KG(F_7)$ and $\KG(\text{non-Pappus matroid})$. This shows that $3$ and $5$ are actually the exact values of their chromatic numbers.
\end{proof}

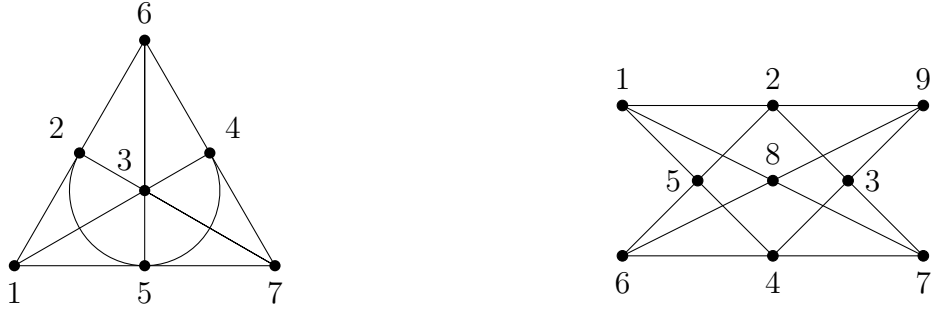
\begin{figure}

\begin{subfigure}{0.48\textwidth}
\centering
\begin{adjustbox}{trim=1cm 2.5cm 1cm 1cm,clip}
\begin{tikzpicture}[scale=2]

% Définir les points initiaux
\coordinate (P1) at (0,0);                          % point 3
\coordinate (P2) at (-30:1);                        % point 7
\coordinate (P3) at (90:1);                         % point 6
\coordinate (P4) at (210:1);                        % point 1

% Tracer les droites dont on veut l'intersection
\path[name path=LineA] ($(P1)!-1!(P2)$) -- ($(P1)!2!(P2)$); % droite passant par P1 et P2
\path[name path=LineB] ($(P3)!-1!(P4)$) -- ($(P3)!2!(P4)$); % droite passant par P3 et P4
\path[name path=LineC] ($(P1)!-1!(P4)$) -- ($(P1)!2!(P4)$); % droite passant par P1 et P4
\path[name path=LineD] ($(P2)!-1!(P3)$) -- ($(P2)!2!(P3)$); % droite passant par P2 et P3
\path[name path=LineE] ($(P1)!-1!(P3)$) -- ($(P1)!2!(P3)$); % droite passant par P1 et P3
\path[name path=LineF] ($(P2)!-1!(P4)$) -- ($(P2)!2!(P4)$); % droite passant par P2 et P4

% Calcul des intersections
\path[name intersections={of=LineA and LineB, by=I}];
\path[name intersections={of=LineC and LineD, by=J}];
\path[name intersections={of=LineE and LineF, by=K}];

% Points visibles
\node[circle,fill=black,inner sep=1.5pt,label={[xshift=0.4mm, yshift=0.7mm]above left:$3$}] at (P1) {};
\node[circle,fill=black,inner sep=1.5pt,label=below:$7$] at (P2) {};
\node[circle,fill=black,inner sep=1.5pt,label=above:$6$] at (P3) {};
\node[circle,fill=black,inner sep=1.5pt,label=below:$1$] at (P4) {};

% Ajouter les points 2 et 4 aux intersections
\coordinate (P5) at (I);
\coordinate (P6) at (J);
\coordinate (P7) at (K);
\node[circle,fill=black,inner sep=1.5pt,label=above left:$2$] at (P5) {};
\node[circle,fill=black,inner sep=1.5pt,label=above right:$4$] at (P6) {};
\node[circle,fill=black,inner sep=1.5pt,label=below:$5$] at (P7) {};

% Redessiner les droites
\draw (P1) -- (P2) -- (P5);             % droite 2-3-7
\draw (P4) -- (P1) -- (P6);             % droite 1-3-4
\draw (P1) -- (P3) -- (P7);             % droite 3-6-5
\draw (P4) -- (P7) -- (P2);             % droite 1-5-7
\draw (P2) -- (P6) -- (P3);             % droite 7-4-6
\draw (P3) -- (P5) -- (P4);             % droite 1-2-6

% Arc de cercle passant par P5, P6, P7

\draw 
  let 
    \p1 = (P1),
    \p2 = (P5),
    \n1 = {veclen(\x2-\x1,\y2-\y1)},
  in
    (P5) arc [start angle=150, end angle=390, radius={\n1}];

\end{tikzpicture}
\end{adjustbox}
\caption{The Fano matroid: the bases are the triples of points that are not aligned.\label{F7}}
\end{subfigure}
\hfill
\begin{subfigure}{0.48\textwidth}
\centering
\begin{tikzpicture}[scale=1]

% Définir les points initiaux
\coordinate (P1) at (-2,1);
\coordinate (P2) at (0,1);
\coordinate (P9) at (2,1);
\coordinate (P5) at (-1,0);
\coordinate (P8) at (0,0);
\coordinate (P3) at (1,0);
\coordinate (P6) at (-2,-1);
\coordinate (P4) at (0,-1);
\coordinate (P7) at (2,-1);

  \tikzset{vertex/.style={circle, fill=black, draw=black, minimum size=4pt, inner sep=0pt}}
  \node[vertex, label=$1$] (v1) at (P1) {};
  \node[vertex, label=$2$] (v2) at (P2) {};
  \node[vertex, label=right:$3$] (v3) at (P3) {};
  \node[vertex, label=below:$4$] (v4) at (P4) {};
  \node[vertex, label=left:$5$] (v5) at (P5) {};
  \node[vertex, label=below:$6$] (v6) at (P6) {};
  \node[vertex, label=below:$7$] (v7) at (P7) {};
  \node[vertex, label=$8$] (v8) at (P8) {};
  \node[vertex, label=$9$] (v9) at (P9) {};

\draw (P1) -- (P9);
\draw (P1) -- (P7);
\draw (P1) -- (P4);
\draw (P6) -- (P2);
\draw (P6) -- (P9);
\draw (P6) -- (P7);
\draw (P4) -- (P9);
\draw (P2) -- (P7);
\end{tikzpicture}

\caption{The non-Pappus matroid: the bases are the triples of points that are not aligned.\label{non-Pappus}}
\end{subfigure}
\caption{Two rank-$3$ matroids whose Kneser graph has its chromatic number determined by~\ref{knes} in Proposition~\ref{prop:upper}.\label{fig:2}}
\end{figure}

Let us consider {\em lattice paths} in $\mathbb{Z}^2$, starting at~$(0,0)$ and ending at
$(m,r)$, with $(1,0)$ and $(0,1)$ steps. A $(1,0)$ step is an {\em East step}, denoted by $E$, and a $(0,1)$ step is a {\em North step}, denoted by $N$. Given two lattice paths $P,Q$ such that $P$ never goes below $Q$, we denote by $\P$ the set of all lattice paths from $(0,0)$ to $(m,r)$ that go neither above~$P$ nor below $Q$. It turns out that the set
\[
\bigl\{B \subseteq [m+r] \colon B \text{ is the North steps of a path in }\P\bigr\}
\]
is the set of bases of a matroid, called the {\em lattice path matroid} and denoted by $M[P,Q]$. This subclass of transversal matroids has been introduced by Bonin, de Mier, and Noy~\cite{bonin2003lattice}. Three special cases are worth highlighting. For the last two cases, we assume further that $m \geq r$.
\begin{itemize}
\item When
$P=(\underbrace{N\dots N}_{r}\underbrace{E\dots E}_{m})$ and $Q=(\underbrace{E\dots E}_{m}\underbrace{N\dots N}_{r})$, the matroid $M[P,Q]$ is actually $U_{r,m+r}$.\\[-2ex]
\item When $P=(\underbrace{NE\dots NE}_{2r}\underbrace{E\dots E}_{m-r})$ and $Q=(\underbrace{E\dots E}_{m}\underbrace{N\dots N}_{r})$, the matroid $M[P,Q]$ is the so-called {\em Catalan matroid}, denoted by $C(r,m)$.\\[-2ex]
\item When $P=(\underbrace{NE\dots NE}_{2r}\underbrace{E\dots E}_{m-r})$ and $Q=(\underbrace{E\dots E}_{m-r}\underbrace{EN\dots EN}_{2r})$, the matroid $M[P,Q]$ is denoted by $C^-(r,m)$.
\end{itemize}

\begin{figure}[h]
\begin{center}
 \includegraphics[width=.7\textwidth]{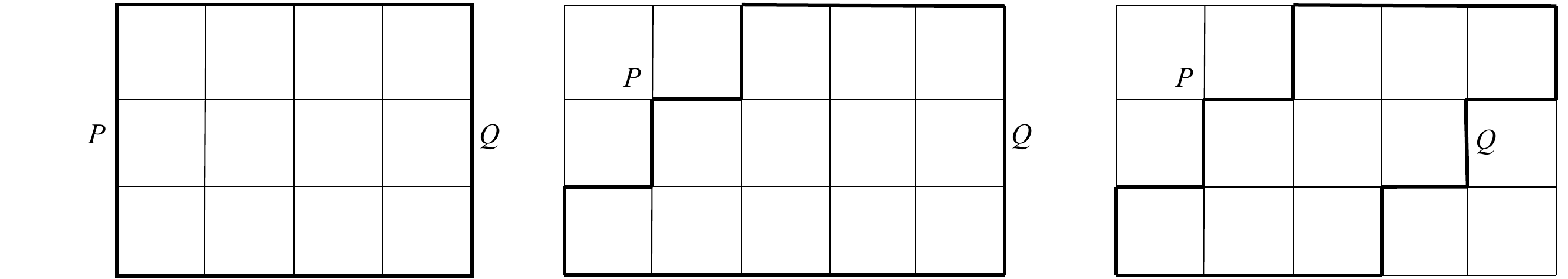}
 \caption{$U_{3,7}$ (left), $C(3,5)$ (middle) and $C^-(3,5)$ (right).}\label{fig:latticepaths}
 \end{center}
\end{figure}

\begin{proposition} Let $m\ge r\ge 1$ be integers. Then,
\[
\chi(\KG(C(r,m))) = \chi(\KG(C^-(r,m))) = m-r+2\, .
\]
\end{proposition}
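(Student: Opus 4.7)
The plan is to sandwich both chromatic numbers between matching bounds equal to $m - r + 2$. For the upper bound, I would simply invoke item~\ref{knes} of Proposition~\ref{prop:upper}: both matroids have $n = m + r$ elements and rank $r$, with $n \ge 2r \ge 2r - 1$ since $m \ge r \ge 1$, so the bound gives $\chi(\KG(\cdot)) \le n - 2r + 2 = m - r + 2$ in both cases.

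For the matching lower bound, I would follow the strategy used for the Fano and non-Pappus matroids in the proof of Proposition~\ref{prop:pappus}: exhibit a copy of the Schrijver graph $\SG(m + r, r)$ as an induced subgraph of each of $\KG(C(r, m))$ and $\KG(C^-(r, m))$. Since $\chi(\SG(m + r, r)) = m - r + 2$, this yields the desired lower bound. The core task thus reduces to verifying that every stable $r$-subset of $[m+r]$ is simultaneously a basis of $C(r, m)$ and of $C^-(r, m)$.

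The key technical step is to translate the lattice-path definition into arithmetic conditions on the elements $b_1 < b_2 < \cdots < b_r$ of a candidate basis $B$. Staying below the upper path $P = (NE)^r E^{m-r}$ becomes the condition $b_i \ge 2i - 1$ for every $i$. For $C(r, m)$ the lower path $Q = E^m N^r$ is so low that it imposes no further constraint, so this is the only requirement. For $C^-(r, m)$, on the other hand, the tighter lower path $Q = E^{m-r}(EN)^r$ translates into the additional inequality $b_i \le m - r + 2i$ for every $i$. Now, if $B$ is stable, then $b_{i+1} - b_i \ge 2$ together with $b_1 \ge 1$ gives $b_i \ge 2i - 1$ by forward telescoping, while the same spacing combined with $b_r \le m + r$ gives $b_i \le m - r + 2i$ by backward telescoping. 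Hence every stable subset is a basis of both matroids, so $\SG(m + r, r)$ embeds into both Kneser graphs, and the equality drops out.

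The main obstacle I anticipate is the careful bookkeeping in this translation from lattice-path constraints to arithmetic inequalities on $b_1, \ldots, b_r$: one must verify that $Q = E^m N^r$ really imposes nothing for $C(r, m)$ while $Q = E^{m-r}(EN)^r$ contributes exactly the upper bound $b_i \le m - r + 2i$ for $C^-(r, m)$. This amounts to computing the $y$-coordinate profile of $Q$ at each step $k$ and comparing it to $|B \cap [k]|$, which is routine but error-prone. Once the translation is settled, the stability hypothesis delivers both inequalities by elementary telescoping and the chromatic number identification is immediate.
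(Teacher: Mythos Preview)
Your proposal is correct and follows essentially the same route as the paper: the paper records the chain $\SG(m+r,r)\subseteq\KG(C^-(r,m))\subseteq\KG(C(r,m))\subseteq\KG(m+r,r)$ and concludes from $\chi(\SG(m+r,r))=\chi(\KG(m+r,r))=m-r+2$, leaving the verification that every stable $r$-set is a basis of $C^-(r,m)$ to the reader. You supply exactly that verification---translating the lattice-path constraints into $2i-1\le b_i$ (from $P$) and $b_i\le m-r+2i$ (from the lower path of $C^-$) and checking both via the spacing $b_{j+1}-b_j\ge2$---which is precisely the ``it can be checked'' step the paper omits.
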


\begin{proof}  It can be checked that $\SG(m+r,r)$ is an  subgraph of $\KG(C^-(r,m))$, itself being an subgraph of $\KG(C(r,m))$, itself being an subgraph of $\KG(m+r,r)$. The result follows from $\chi(\SG(m+r,r)) = \chi(\KG(m+r,r)) = m+r- 2r+2=m-r+2$.
\end{proof}

\section{Disjoint bases, disjoint cobases, and Borsuk property}
 
\label{sec:disj-borsuk}

\subsection{Proof of Theorem \ref{thm:main}} 

We now use the ingredients given in Section~\ref{sec:prel} to prove Theorem~\ref{thm:main}.

\begin{proof}[Proof of Theorem~\ref{thm:main}]
Let $M$ be a matroid. Denote by $n$ the size of its ground set and by $r$ its rank. By Proposition~\ref{prop:dual}, we can assume without loss of generality that $M$ has two disjoint bases. Write $M$ as the direct sum of its connected components $M = M_1 \oplus M_2 \oplus \cdots \oplus M_c$, and denote by $n_i$ the number of elements of $M_i$ and by $r_i$ its rank. Each $M_i$ also admits two disjoint bases. Every proper coloring of $\KG(n_i,r_i)$ induces a proper coloring of $\KG(M_i)$ because every basis of $M_i$ is a vertex of $\KG(n_i,r_i)$. Proposition~\ref{prop:upper} shows that $\chi(\KG(M_i))\leq n_i$. With Lemma~\ref{lem:kneser}, we get $f(M_i) \leq n_i$, which means that each $M_i$ has the Borsuk property. The latter implies, by Corollary~\ref{cor:conn}, that $M$ has the Borsuk property.
\end{proof}

\subsection{Matroids of rank at most two}\label{subsec:m2}

In this section, we prove Theorem~\ref{thm:rank2}, which states that every matroid of rank at most two and with at least two bases has the Borsuk property.
%settle completely the situation of matroids of rank at most two. 
 
We start with a preliminary result.

\begin{proposition}\label{prop:rank2}
Consider a rank-two matroid with at least two bases. If it has the strong basis intersection property, then it has a loop or a coloop.
\end{proposition}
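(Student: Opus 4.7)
The plan is to prove the contrapositive: assume $M$ is a rank-two matroid with at least two bases, with neither loop nor coloop, and exhibit two disjoint bases either in $M$ itself or in $M^\star$. Since $M$ has no loop, each element lies in a rank-one flat, and these rank-one flats (the parallel classes) partition the ground set $E$ into blocks $P_1, \ldots, P_k$ with $k \geq 2$ (else $M$ would have at most one basis). The bases of $M$ are exactly the two-element subsets $\{x,y\}$ with $x$ and $y$ in distinct parallel classes. I will then run a short case analysis on the sizes of the $P_i$.

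First, if at least two parallel classes have size $\geq 2$, say $|P_1|, |P_2| \geq 2$, then picking distinct $a,b \in P_1$ and distinct $c,d \in P_2$ yields two disjoint bases $\{a,c\}$ and $\{b,d\}$ of $M$. Second, if exactly one class, say $P_1$, has size $\geq 2$ and $k \geq 3$, pick $a \neq b$ in $P_1$, $c \in P_2$, $d \in P_3$: again $\{a,c\}$ and $\{b,d\}$ are disjoint. Third, if exactly one class has size $\geq 2$ and $k = 2$, then the unique element of $P_2$ lies in every basis of $M$ and is therefore a coloop, contradicting our assumption.

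The remaining case is that every parallel class is a singleton, so $M \cong U_{2,k}$. For $k \geq 4$, $\{1,2\}$ and $\{3,4\}$ are disjoint bases of $M$. For $k = 3$, we have $M \cong U_{2,3}$, which does have the basis intersection property—so the argument inside $M$ alone cannot finish the job. Here I will turn to the dual: $M^\star \cong U_{1,3}$, whose three bases $\{1\}, \{2\}, \{3\}$ are pairwise disjoint, so $M^\star$ fails the basis intersection property, i.e., $M$ fails the strong basis intersection property. The case $k = 2$ is excluded since $U_{2,2}$ has only one basis.

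The only subtle point—and the reason the statement says \emph{strong} basis intersection property rather than merely basis intersection property—is precisely the $U_{2,3}$ case, which is loop- and coloop-free and has BIP, but whose dual spoils BIP. So the main (modest) obstacle is remembering to invoke the dual hypothesis in exactly that case; everything else is a direct enumeration of parallel-class size patterns.
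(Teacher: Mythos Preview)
Your proof is correct, and it takes a genuinely different route from the paper's. The paper does not use parallel classes at all: it fixes two intersecting bases $\{1,2\}$ and $\{1,3\}$, uses the cobasis-intersection hypothesis to force $n \geq 4$, then observes (from ``no loop'' and BIP) that any basis containing $4$ must be $\{1,4\}$, and finally gets a contradiction from ``no coloop'' because a basis avoiding $1$ would have to contain all of $2$, $3$, and $4$. Your argument via the parallel-class decomposition is more structural: it makes transparent that, among loop- and coloop-free rank-two matroids, $U_{2,3}$ is the \emph{only} one with the basis intersection property, and that this is precisely where the dual hypothesis is needed. The paper's element-chase is slightly shorter but hides this classification; your approach makes clearer why the word \emph{strong} is essential in the statement.
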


\begin{proof}
Let $[n]$ be the ground set of the matroid for some integer $n$, and assume that it has the strong basis intersection property.

Without loss of generality, let $\{1,2\}$ and $\{1,3\}$ be two bases of the matroid. Since every two cobases intersect, $n$ is at least $4$. We proceed by contradiction. Suppose that the matroid has no loop and no coloop. There exists thus a basis containing $4$, which is necessarily $\{1,4\}$ as this basis must intersect the two other bases. The element $1$ not being a coloop, there must be a basis missing it. But such a basis would then be disjoint from at least one of  $\{1,2\}$, $\{1,3\}$, and $\{1,4\}$, which is not possible.
\end{proof}

The proof considers implicitly the matroid with $\{1,2,3,4\}$ as ground set, and with three bases: $\{1,2\}$, $\{1,3\}$, and $\{2,3\}$. This matroid has rank two and has the strong basis intersection property. With the help of Proposition~\ref{prop:rank2}, we can prove Theorem~\ref{thm:rank2}, which shows in particular that even though this matroid has the strong basis intersection property, it has the Borsuk property. This matroid is actually the smallest example with at least two bases showing that not to have the strong basis intersection property is not a necessary condition to have the Borsuk property. (Formally, a matroid formed only by a single loop is already such an example, but the reader will certainly agree that it is not really an interesting one.) Infinite families of other examples will be given in Section~\ref{sec:sbip}.

\begin{proof}[Proof of Theorem~\ref{thm:rank2}]
Consider a matroid $M$ with at least two bases. It cannot have rank zero. Suppose that $M$ has rank one. It has the Borsuk property: it is immediate to check that every element is either a basis, or a loop; by Corollary~\ref{cor:conn}, we can assume that it has no loop; by Theorem~\ref{thm:main}, we can further assume that $M$ has only one element, and the conclusion follows. 

Finally, suppose that $M$ has rank two. Write $M$ as the direct sum of its connected components.By Corollary~\ref{cor:conn}, we are left with the case where $M$ is a rank-two connected matroid, and Proposition~\ref{prop:rank2} shows then that $M$ does not have the strong basis intersection property, and we can apply Theorem~\ref{thm:main}.
\end{proof}

\subsection{Matroids of rank larger than two}

In this section, we prove Theorem~\ref{thm:rankr}, which shows that a matroid with basis intersection property has a Borsuk number bounded by an expression depending only on the rank.

\begin{proof}[Proof of Theorem~\ref{thm:rankr}]
By Proposition~\ref{prop:conn}, we can assume that $M$ is connected. Denote by $s$ the minimal size of the intersection of any two bases. Pick an arbitrary basis $B_0$. Then for each element $e$ in $B_0$, pick a basis $B(e)$ of $M$ not containing $e$. Such a basis exists since $M$ is connected. Now, using these $B_0$ and $B(e)$’s, we will assign to each basis $B$ a $(s+1)$-tuple of elements in $B \cap (B_0 \cup B(e))$ for some $e$ depending on $B$. 

We proceed as follows. Pick arbitrarily $s$ elements in $B \cap B_0$. This forms an $s$-tuple that we denote by $X$. Pick also an arbitrary element $e$ in $X$. The intersection $B \cap B(e)$ contains at least one element not in $X$ since this intersection has size at least $s$ and does not contain $e$. Denote by $f$ such an element in $B \cap B(e)$. We assign to $B$ the $(s+1)$-tuple formed by $X$ and $f$. We emphasize that $X$ and $f$ are included in $B$.

Now, notice that any two bases with intersection of size exactly $s$ is assigned distinct $(s+1)$-tuples since every such $(s+1)$-tuple is included in the basis to which it is assigned. Consider the partition where each part is formed by the bases assigned to a $(s+1)$-tuple. In this partition, any two bases of $M$ that form a diameter are associated with distinct parts.

We finish by computing an upper bound on the number of parts. We can choose $B_0$ and $B(\bar e)$ for some $\bar e$ such that their intersection is of size $s$. The basis $B(\bar e)$ is already fine for every $e$ not in the intersection $B_0 \cap B(\bar e)$. So, we only need to choose $B(e)$’s for $e$ in this intersection. In total, these $B(e)$ (with $B(\bar e)$) cover at most $r-s + s(r-s)$ elements outside $B_0$, which is smaller than $\frac 1 4 (r+1)^2$. The number of possible $(k+1)$-tuples is thus upper-bounded by 
\[
{r \choose s} \times \left(r+\frac 1 4 (r+1)^2 \right) \leq 2^{r-2} (r+2)^2 \, . \qedhere
\] 
\end{proof}

\section{Matroids with the strong basis intersection property}\label{sec:sbip}

The strong basis intersection property, introduced in Section~\ref{sec:intro}, finds its motivation in Theorem~\ref{thm:main}: Not to have the strong basis intersection property implies to have the Borsuk property. Even if this condition is not necessary---see the discussion in Section~\ref{subsec:m2}---Theorem~\ref{thm:main} and Corollary~\ref{cor:conn} show that if we want to find a matroid that does not have the Borsuk property---something which is still possible even if we do not know any such matroid---, it is sufficient to focus on connected matroids with the strong basis intersection property. Since we think that this question is also interesting on its own, we focus in this section on considering some connected matroids that have the strong basis intersection property. Even though there are many such matroids, building them is not a straightforward task.

Notice that if we remove the connectivity condition, it is easy to build matroids that have the strong basis intersection property from matroids that just have the basis intersection property. (We remind the reader that a matroid has the basis intersection property if every two bases intersect, but not necessarily every two cobases.) It is immediate that if a matroid $M$ has the basis intersection property, then $M \oplus M^\star$ has the strong basis intersection property. This shows for instance that $U_{2,3} \oplus U_{1,3}$ has the strong basis intersection property.

Among the classical families of matroids, $\Theta_n$ matroids are connected, have the strong basis intersection property, and satisfy the Borsuk property for $n\geq 2$ (Proposition~\ref{prop:theta_n} below). In this section, in addition of proving Proposition~\ref{prop:theta_n}, we discuss how the classical series and parallel operations for matroids can be used to build arbitrarily large connected matroids with the strong basis intersection property (Section~\ref{subsec:ser-par}). Finally, we also describe a construction allowing to obtain arbitrarily large connected graphic matroids with the strong basis intersection property (Section~\ref{subsec:graphic}).

\subsection{$\Theta_n$ matroids}\label{subsec:theta_n}

For $n\geq 1$, the $\Theta_n$ matroid has its ground set formed by two disjoint sets of the form $X=\{x_1,\ldots ,x_n\}$ and $Y=\{y_1,\ldots ,  y_n\}$ and its bases are the subsets $B \subseteq X \cup Y$ of size $n$ such that $|B \cap X | \leq 2$ and $B \neq (Y - y_i) \cup x_i$ for all $i$.

\begin{proposition}\label{prop:theta_n}
For $n\geq 5$, the matroid $\Theta_n$ has the strong basis intersection property and its Borsuk number is at most $n-2$.
\end{proposition}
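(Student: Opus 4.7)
The plan is to handle the two claims separately: first I verify that $\Theta_n$ has the strong basis intersection property for $n \geq 5$, then I produce a partition of $\B(\Theta_n)$ into at most $n-2$ parts, each of diameter strictly less than $\diame(\B(\Theta_n))$.

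\textbf{Strong basis intersection property.} A convenient simplification is that $|X \cup Y| = 2n$ equals twice the rank of $\Theta_n$. Consequently, for any two bases $B_1, B_2$ of $\Theta_n$ one has $B_1 \cap B_2 = \varnothing$ if and only if the complementary cobases are disjoint, so the basis and cobasis intersection properties coincide here and it suffices to verify the former. For any basis $B_i$, the constraint $|B_i \cap X| \leq 2$ gives $|B_i \cap Y| \geq n-2$, hence $|B_1 \cap Y| + |B_2 \cap Y| \geq 2n - 4 > n$ as soon as $n \geq 5$; this forces $B_1 \cap B_2 \cap Y \neq \varnothing$.

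\textbf{Structure of diametric pairs.} I write each basis as $B = A \cup (Y \setminus C)$ with $A \subseteq X$, $C \subseteq Y$, and $|A| = |C| \leq 2$ (the equality is forced by $|B| = n$). Since $X$ and $Y$ are disjoint, the distance decomposes as $|B_1 \triangle B_2| = |A_1 \triangle A_2| + |C_1 \triangle C_2|$, and each summand is bounded by $|A_1| + |A_2| \leq 4$. Hence $\diame(\B(\Theta_n)) \leq 8$, with equality attained exactly when $|A_1| = |A_2| = 2$, the $X$-parts $A_i$ are disjoint, and the $Y$-complements $C_i$ are disjoint (such pairs exist since $n \geq 4$). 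In particular, any basis with $|B \cap X| \leq 1$ is at distance at most $6$ from every other basis.

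\textbf{Coloring via a Kneser coloring on $X$.} The central observation is that every diametric pair is determined in part by a pair of disjoint elements of $\binom{X}{2}$, so a proper coloring of $\binom{X}{2}$ viewed as the vertex set of the classical Kneser graph $\KG(n, 2)$ already separates all diametric pairs. Fix such a proper coloring $c \colon \binom{X}{2} \to [n-2]$; it exists by the Lovász--Kneser theorem (equivalently, by Proposition~\ref{prop:upper}\ref{knes}). Assign each basis $B$ with $|B \cap X| = 2$ the color $c(B \cap X)$, and assign the color $1$ to every basis with $|B \cap X| \leq 1$. Two bases sharing a color either both satisfy $|B \cap X| = 2$ and have intersecting $X$-parts (by properness of $c$), in which case they cannot be a diametric pair, or at least one of them has $|B \cap X| \leq 1$ and is therefore at distance at most $6$ from the other. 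Hence each of the $n-2$ color classes has diameter strictly less than $8$, yielding $f(\Theta_n) \leq n-2$.

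The step I expect to require the most care is pinning down the structure of the diameter-realizing pairs, since $\Theta_n$ has three different shapes of basis. Once this is in hand, the key insight---that a proper Kneser coloring of $\binom{X}{2}$ alone already separates all diametric pairs, without any interaction with $Y$---makes the rest of the argument routine.
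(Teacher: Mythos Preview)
Your proof is correct and follows the same underlying strategy as the paper. The paper's explicit partition---assign a basis $B$ to part $\min\bigl(n-2,\min\{j:x_j\in B\cap X\}\bigr)$---is nothing other than the standard $(n-2)$-coloring of $\KG(n,2)$ applied to $B\cap X$, so your abstract invocation of the Kneser bound and the paper's explicit formula are two packagings of the same idea. The one genuine difference is in the first half: the paper obtains the strong basis intersection property by citing the self-duality of $\Theta_n$ from Oxley, whereas your pigeonhole argument on $Y$ together with the observation that $|E|=2\rk(\Theta_n)$ forces basis- and cobasis-intersection to coincide is more self-contained and avoids that external reference.
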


\begin{proof}
It is immediate to check that it has the basis intersection property. This implies that it has actually the strong basis intersection property since the $\Theta_n$ matroid is self-dual~\cite[p.~663]{oxley2011matroid}. Two bases $B,B'$ realize the diameter of $\B(\Theta_n)$ only if 
\begin{equation}\label{eq:theta_n}
B\cap B'\cap X = \varnothing \quad \mbox{and} \quad |B \cap X| = B' \cap X| = 2 \, .
\end{equation} For $i \in [n-2]$, set $\B_i$ as the set of bases $B$ such that $i=\min\bigl(n-2,\min\{i \in [n] \colon x_i \in B\cap X\}\bigr)$. Since any two bases satisfying~\eqref{eq:theta_n} are assigned to distinct parts $\B_i$, this forms a partition of $\B(\Theta_n)$ such that each part has a diameter smaller than that of $\B(\Theta_n)$.
\end{proof}

It is not difficult to establish the following equalities:
\[
f(\Theta_1) = +\infty \quad \mbox{and} \quad f(\Theta_2) = f(\Theta_3) = f(\Theta_4) = 2\, .
\]
Therefore, the $\Theta_n$ matroid has the Borsuk property for every $n\geq 2$.

\subsection{Series and parallel operations}\label{subsec:ser-par}
We establish the following result implying that infinite families of connected matroids with the strong basis intersection property can be easily built with the help of series and parallel connections.

\begin{lemma}\label{lem:connec}
If $M$ or $M'$ has the basis intersection property, then $\Ser(M,M')$ has the basis intersection property. If every two bases of $M$ or $M'$ intersect in at least two elements, then $\Par(M,M')$ has the basis intersection property.
\end{lemma}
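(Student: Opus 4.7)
The plan is to unpack the explicit descriptions of the bases of $\Ser(M,M')$ and $\Par(M,M')$ given in Section~\ref{sec:matr}, and to observe that each basis of either connection essentially restricts, away from the shared element $p$, to a basis of $M$ (and similarly of $M'$).

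For the series connection, recall that every basis of $\Ser(M,M')$ has the form $B \cup B'$ with $B \in \B(M)$, $B' \in \B(M')$, and $B \cap B' = \varnothing$. Given two such bases $C_i = B_i \cup B_i'$ ($i=1,2$), if $M$ has the basis intersection property then $B_1 \cap B_2 \neq \varnothing$, and since $B_1 \cap B_2 \subseteq C_1 \cap C_2$, the first statement follows. The case where $M'$ has the basis intersection property is completely symmetric.

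For the parallel connection, the key observation I would first establish is: for every basis $C$ of $\Par(M,M')$, there exists $B \in \B(M)$ with $C \cap (E \setminus \{p\}) = B \setminus \{p\}$. This is a short case check against the two types of bases in the definition of $\Par(M,M')$ (type~(a): $C = B \cup B'$ with $p \in B \cap B'$, so $C \cap E = B$; type~(b): $C = (B \cup B')\setminus\{p\}$ with $p$ in exactly one of $B, B'$, so $C \cap E$ is either $B$ or $B \setminus\{p\}$, which equals $B \setminus \{p\}$ in all cases). Then, assuming every two bases of $M$ intersect in at least two elements, for two bases $C_1, C_2$ of $\Par(M,M')$ with associated $B_1, B_2 \in \B(M)$, one has $(B_1 \cap B_2) \setminus \{p\} \subseteq C_1 \cap C_2$, and the left-hand side is nonempty because $|B_1 \cap B_2|\geq 2$ leaves at least one element outside $\{p\}$. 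The commutativity of $\Par$ in its two arguments handles the case where the assumption is on $M'$ instead.

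The main subtlety, and the reason behind the ``at least two'' hypothesis, is precisely the possibility that two bases of $M$ intersect only at $p$: type-(b) bases of $\Par(M,M')$ delete $p$, so an $M$-level intersection reduced to $\{p\}$ leaves no trace at the $\Par(M,M')$-level. Requiring $|B_1 \cap B_2| \geq 2$ is exactly what guarantees a surviving common element after the possible removal of $p$.
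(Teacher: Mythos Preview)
Your argument is correct and follows essentially the same route as the paper: the series case is immediate, and for the parallel case you (like the paper) observe that every basis $C$ of $\Par(M,M')$ satisfies $B\setminus\{p\}\subseteq C$ for some $B\in\B(M)$, whence $(B_1\cap B_2)\setminus\{p\}\subseteq C_1\cap C_2$ is nonempty under the ``at least two'' hypothesis. Your explicit case check and your explanation of why the hypothesis is needed add helpful detail but do not change the approach.
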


\begin{proof}
The first implication is immediate. Consider the second one. Assume that every two bases of $M$ intersect in at least two elements. Consider two bases $B$ and $\tilde B$ of $\Par(M,M')$. There exist $B_1$ and $B_2$  bases of $M$ and $B_1'$ and $B_2'$ bases of $M'$ such that $(B_1 \cup B_1')\setminus \{p\}$ is included in $B$ and $(B_2 \cup B_2')\setminus \{p\}$ is included in $\tilde B$. (These inclusions can actually be equalities.) We have $(B_1 \cap B_2) \setminus\{p\} \subseteq B \cap \tilde B$. By the assumption on the bases of $M$, the set $(B_1 \cap B_2) \setminus\{p\}$ is non-empty, which implies that the $B$ and $\tilde B$ share an element.
\end{proof}

From Lemma~\ref{lem:connec}, we obtain the following.

\begin{proposition}\label{prop:sbip-ser}
Let $M$ be a matroid such that every two bases intersect in at least two elements. Then $\Ser(M,M^\star)$ has the strong basis intersection property.
\end{proposition}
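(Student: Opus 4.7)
The plan is to derive the strong basis intersection property of $\Ser(M,M^\star)$ by applying Lemma~\ref{lem:connec} twice, once in its ``series'' form and once in its ``parallel'' form, combined with the self-duality identity \eqref{eq:ser-par}. Recall that by definition a matroid has the strong basis intersection property precisely when both it and its dual have the basis intersection property, so it suffices to verify these two conditions separately for $N \coloneqq \Ser(M,M^\star)$ and $N^\star$.

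For the first condition, I would note that the hypothesis --- every two bases of $M$ intersect in at least two elements --- in particular implies that $M$ has the basis intersection property. Hence, the first implication of Lemma~\ref{lem:connec}, applied to the pair $(M, M^\star)$, yields that $N = \Ser(M,M^\star)$ has the basis intersection property.

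For the second condition, I would invoke the duality \eqref{eq:ser-par}, which gives
\[
N^\star = \bigl(\Ser(M,M^\star)\bigr)^\star = \Par\bigl(M^\star, (M^\star)^\star\bigr) = \Par(M^\star, M).
\]
Now the second implication of Lemma~\ref{lem:connec}, applied to the pair $(M^\star, M)$, says that $\Par(M^\star, M)$ has the basis intersection property provided every two bases of at least one of the two matroids intersect in at least two elements. The hypothesis of the proposition furnishes exactly this for the second entry $M$, so $N^\star$ has the basis intersection property. Combining both, $N$ has the strong basis intersection property.

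There is essentially no obstacle here beyond bookkeeping; the only points to be careful with are that Lemma~\ref{lem:connec} is not symmetric in its two entries for the parallel case (the hypothesis is on ``$M$ or $M'$''), and that the series and parallel connections are well-defined, i.e., the shared element $p$ is neither a coloop nor a loop of either of the matroids involved. The latter is inherent in the setup of $\Ser$ and $\Par$ and is implicitly assumed in the statement.
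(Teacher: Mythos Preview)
Your proof is correct and follows essentially the same approach as the paper: apply Lemma~\ref{lem:connec} to get the basis intersection property for $\Ser(M,M^\star)$, use the duality~\eqref{eq:ser-par} to identify the dual as $\Par(M^\star,M)$, and apply Lemma~\ref{lem:connec} again. Your care about the order of arguments in the parallel connection is unnecessary since the hypothesis of Lemma~\ref{lem:connec} is symmetric in $M$ and $M'$, but it does no harm.
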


\begin{proof}
By the hypothesis on $M$ and Lemma~\ref{lem:connec}, $\Ser(M,M^\star)$ and $\Par(M,M^\star)$ have each the basis intersection property. By~\eqref{eq:ser-par}, every two cobases of $\Ser(M,M^\star)$ intersect. The matroid $\Ser(M,M^\star)$ has thus the strong basis intersection property.
\end{proof}

If $M$ is connected, then so is $\Ser(M,M^\star)$. With Proposition~\ref{prop:sbip-ser}, arbitrarily large connected matroids with the strong basis intersection property can thus be built. For instance, if $M$ is the uniform matroid $U_{m,m+n}$ with $m \geq n+2$, then it satisfies the condition of Proposition~\ref{prop:sbip-ser}, and $\Ser(U_{m,m+n},U_{n,m+n})$ is a connected matroid satisfying the strong basis intersection property. By Proposition~\ref{prop:ser}, it has the Borsuk property.

\subsection{Graphic matroids with the strong basis intersection property}\label{subsec:graphic}

Consider a graph $H$ with a marked vertex $v_0$ and a graph $L$ with at least $\deg_H(v_0)$ vertices. Build a new graph as follows: 
\begin{itemize}
\item replace $v_0$ with $L$.
\item make sure that each edge of $H$ incident to $v_0$ is now incident to a distinct vertex in $L$.
\end{itemize}
An example obtained with $H=K_3$ and $L$ a planar graph with $6$ vertices is illustrated on Figure~\ref{fig:graphic}.

\begin{proposition}\label{prop:sbip-graphic}
If $M(H)$ and $M^\star(L)$ both have the basis intersection property, then $M(G)$ has the strong basis intersection property.
\end{proposition}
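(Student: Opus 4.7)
The plan is to split the strong basis intersection property of $M(G)$ into its two constituent parts and establish them independently. Call them (I) basis intersection in $M(G)$: every two spanning trees of $G$ share an edge; and (II) basis intersection in $M^\star(G)$: for every two spanning trees $T_1, T_2$ of $G$, some edge of $G$ lies outside $T_1 \cup T_2$. A pleasing feature, which matches the symmetric form of the hypothesis, is that (I) will be reduced entirely to the hypothesis on $M(H)$, while (II) will be reduced entirely to the hypothesis on $M^\star(L)$. Throughout, I identify each edge of $E(H)$ originally incident to $v_0$ with its relocated copy in $G$, whose $L$-endpoint now plays the role of $v_0$.

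For (I), let $T_1, T_2$ be spanning trees of $G$ and set $T_i^H := T_i \cap E(H)$. The key structural claim is that $T_i^H$, viewed as a subgraph of $H$, is spanning and connected. To establish this, I would introduce the surjection $\pi\colon V(G) \to V(H)$ that collapses $V(L)$ to $v_0$ and is the identity on $V(H) \setminus \{v_0\}$: under $\pi$, each edge of $E(L)$ becomes a loop at $v_0$, while each edge of $E(H)$ keeps its endpoints as in $H$. The image of $T_i$ is then $T_i^H$ together with some loops, and since $T_i$ is a spanning tree of $G$, this image is spanning and connected in the multigraph obtained from $H$ by adding those loops; discarding the loops leaves $T_i^H$ as a spanning connected subgraph of $H$, which therefore contains a spanning tree $S_i$ of $H$. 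Applying the hypothesis on $M(H)$ to the pair $S_1, S_2$ yields an edge $e \in S_1 \cap S_2 \subseteq T_1^H \cap T_2^H \subseteq T_1 \cap T_2$.

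For (II), again let $T_1, T_2$ be spanning trees of $G$ and set $T_i^L := T_i \cap E(L)$. This is a forest in $L$ (equivalently, independent in $M(L)$), so it extends to a basis $T_i^{L+}$ of $M(L)$. The hypothesis on $M^\star(L)$, applied to the cobases $E(L) \setminus T_1^{L+}$ and $E(L) \setminus T_2^{L+}$, supplies an edge $g \in E(L) \setminus (T_1^{L+} \cup T_2^{L+})$. Since $T_i \cap E(L) = T_i^L \subseteq T_i^{L+}$, the edge $g$ lies in neither $T_1$ nor $T_2$, and so witnesses that the cobases $E(G) \setminus T_1$ and $E(G) \setminus T_2$ of $M^\star(G)$ share the element $g$.

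The main obstacle I foresee is the structural observation in (I), namely that $T_i \cap E(H)$ viewed in $H$ is always a spanning connected subgraph. The vertex-contraction argument is clean, but relies on the careful identification of edges originally incident to $v_0$ in $H$ with their relocated copies in $G$, and this is the only real bookkeeping subtlety in the whole proof. Once this observation is in hand, both parts follow essentially immediately from the respective hypotheses, and the natural symmetry---one hypothesis per side of the construction---explains why the statement splits so cleanly.
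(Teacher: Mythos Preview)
Your proof is correct and follows essentially the same approach as the paper: for~(I) you contract $L$ to a single vertex (your map $\pi$) to see that each spanning tree of $G$ restricts to a spanning connected subgraph of $H$, hence contains a basis of $M(H)$; for~(II) you use that each spanning tree of $G$ meets $E(L)$ in a forest, hence its complement in $E(L)$ contains a cobasis of $M(L)$. The only difference is that you spell out the contraction map and the forest-extension step more explicitly than the paper does.
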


Moreover, if $H$ and $L$ are $2$-connected, then so is $G$. Proposition~\ref{prop:sbip-graphic} combined with this remark allows to build arbitrarily large connected graphic matroids with the strong basis intersection property. Figure~\ref{fig:graphic} provides an example of a graph $G$ built this way that is $2$-connected and planar. This implies in particular that $M(G)$ is connected and that its dual is graphic.

%\todo[inline]{Does $M(G)$ of Proposition~\ref{prop:sbip-graphic} have the Borsuk property? Prove at least an upper bound on the Borsuk number on $f(M(G))$. This should be doable!!!}

\begin{proof}[Proof of Proposition~\ref{prop:sbip-graphic}]
Suppose that $M(H)$ and $M^\star(L)$ both have the basis intersection property.

Consider two bases $B_1,B_2$ of $M(G)$. By contracting each $L$ to a single vertex, we transform $G$ into $H$. Each of $B_1$ and $B_2$ becomes a covering spanning subgraph of $H$ and contains a basis of $M(H)$. Thus $B_1$ and $B_2$ have a non-empty intersection.

Consider now two cobases $B_1^\star$ and $B_2^\star$ of $M(G)$. Each of $B_1^\star$ and $B_2^\star$ induces the complement of a forest on $L$ and contains thus a cobasis of $L$. Thus $B_1^\star$ and $B_2^\star$ have a non-empty intersection. 
\end{proof}

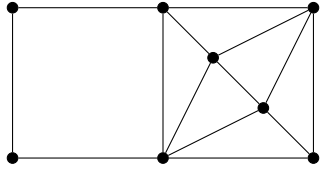
\begin{figure}
 \begin{tikzpicture}[scale=2]
  \tikzset{vertex/.style={circle, fill=black, draw=black, minimum size=4pt, inner sep=0pt}}
  \node[vertex] (v1) at (0,0) {};
  \node[vertex]  (v2) at (0,1) {};
  \node[vertex]  (v3) at (1,1) {};
  \node[vertex]  (v4) at (2,1) {};
  \node[vertex]  (v5) at (2,0) {};
  \node[vertex]  (v6) at (1,0) {};
  
  \coordinate (w1) at ($ (v3) !0.333! (v5) $);
  \coordinate (w2) at ($ (v3) !0.667! (v5) $);
  
  \node[vertex]  (v7) at (w1) {};
  \node[vertex]  (v8) at (w2) {};
  
  \draw (v1) -- (v2) -- (v3) -- (v4) -- (v5) -- (v6) -- (v1);
  \draw (v3) -- (v6);
  \draw (v3) -- (v7) -- (v8) -- (v5);
  \draw (v8) -- (v6) -- (v7) -- (v4) -- (v8);

\end{tikzpicture}

\caption{Planar graph $G$ obtained with the construction of Section~\ref{subsec:graphic}, with $H=K_3$ and $L$ a $6$-vertex planar graph such that $M^\star(L)$ has the basis intersection property. The graphic matroid $M(G)$ is connected and has the strong basis intersection property. It is moreover not difficult to check that $f(M(G))=3$, which shows that $M(G)$ has the Borsuk property.\label{fig:graphic}}
\end{figure}

The following proposition shows that to get an example of a graph $G$ built as above without the Borsuk property, it is enough to consider graphs $L$ such that $M(L)$ does not have the basis intersection property. 

\begin{proposition}\label{prop:graphic-bn}
If $M(L)$ does not have the basis intersection property, then $f(M(G)) \leq f(M(H))$.
\end{proposition}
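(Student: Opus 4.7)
The plan is to pull back a diameter-good partition of $\B(M(H))$ to one of $\B(M(G))$ via a suitable map $\pi\colon\B(M(G))\to\B(M(H))$. Decompose $E(G)=E_L\sqcup E_H$ where $E_L$ are the edges of $L$ and $E_H$ are the (re-routed) edges of $H$; contracting $E_L$ in $G$ identifies all vertices of $L$ with $v_0$ and recovers $H$, so $M(G)/E_L=M(H)$.

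I will first establish two ingredients. \emph{Ingredient 1:} for every basis $B$ of $M(G)$, the set $B\cap E_H$ has rank $\rk(M(H))$ in $M(H)$, and hence contains a basis of $M(H)$. This follows from the contraction rank formula
\[
r_{M(H)}(B\cap E_H)=r_{M(G)}\bigl((B\cap E_H)\cup E_L\bigr)-r_{M(G)}(E_L)=\rk(M(G))-\rk(M(L))=\rk(M(H)).
\]
\emph{Ingredient 2:} $\diame(\B(M(G)))\geq 2\rk(M(L))+\diame(\B(M(H)))$. The union of any spanning tree of $L$ with any spanning tree of $H$ has $|V(G)|-1$ edges and is connected in $G$, hence is a spanning tree of $G$. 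Pairing two disjoint bases $T_L^\star,T_L^{\star\star}$ of $M(L)$ (given by hypothesis) with two bases $T_H^1,T_H^2$ of $M(H)$ at distance $\diame(\B(M(H)))$ apart then yields two bases of $M(G)$ whose symmetric difference has size $2\rk(M(L))+\diame(\B(M(H)))$.

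With Ingredient 1, I can choose for each basis $B$ of $M(G)$ an arbitrary basis $\pi(B)$ of $M(H)$ with $\pi(B)\subseteq B\cap E_H$. Given a partition $\B_1,\ldots,\B_k$ of $\B(M(H))$ into $k=f(M(H))$ parts of diameter strictly less than $\diame(\B(M(H)))$, set $\overline\B_i=\pi^{-1}(\B_i)$. For $B,B'\in\overline\B_i$, the strict bound $|\pi(B)\triangle\pi(B')|<\diame(\B(M(H)))$ rewrites as $|\pi(B)\cap\pi(B')|>\rk(M(H))-\tfrac12\diame(\B(M(H)))$, and since $\pi(B)\cap\pi(B')\subseteq B\cap B'$ this transfers to $|B\cap B'|$, so that
\[
|B\triangle B'|=2\rk(M(G))-2|B\cap B'|<2\rk(M(L))+\diame(\B(M(H)))\leq\diame(\B(M(G))),
\]
using Ingredient 2 for the last inequality. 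This yields $f(M(G))\leq k=f(M(H))$.

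The main technical content is Ingredient 1, in which the substitution structure of Section~\ref{subsec:graphic} is genuinely exploited to identify $M(G)/E_L$ with $M(H)$; this would fail for more general vertex-expansion constructions. The hypothesis that $M(L)$ fails the basis intersection property enters only through Ingredient 2, where the two disjoint bases of $M(L)$ are needed to make $\diame(\B(M(G)))$ large enough that the final strict inequality closes.
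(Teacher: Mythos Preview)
Your proof is correct and follows essentially the same approach as the paper's: both choose for each spanning tree $B$ of $G$ a ``representative'' spanning tree of $H$ contained in $B\cap E_H$, pull back a diameter-good partition of $\B(M(H))$ along this representative map, and then argue that bases in a common part share more than $t(H)\coloneqq\rk(M(H))-\tfrac12\diame(\B(M(H)))$ edges, which by the diameter lower bound (your Ingredient~2) rules out diameter-realizing pairs. The only stylistic difference is that you justify Ingredient~1 via the contraction rank formula and the identification $M(G)/E_L=M(H)$, whereas the paper argues directly in graph-theoretic terms (contracting $L$ to a point turns each spanning tree of $G$ into a connected spanning subgraph of $H$).
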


\begin{proof}
Suppose that $M(L)$ does not have the basis intersection property, i.e., that their exist in $L$ two disjoint spanning trees, and denote by $t(H)$ the minimum number of edges shared by two spanning trees of $H$.

Using two spanning trees of $H$ sharing exactly $t(H)$ edges and two disjoint spanning trees of $L$, it is straightforward to build two spanning trees of $G$ sharing exactly $t(H)$ edges. Hence, two spanning trees of $G$ realizing the diameter of $\B(M(G))$ share at most $t(H)$ edges. (It is not difficult to see that they actually share exactly that number of edges, but this is not needed for the proof.)

Given a partition $\B_1,\B_2,\ldots,\B_k$ of $\B(M(H))$ into $k$ parts of diameter smaller than that of $\B(M(H))$, we build a partition $\overline\B_1,\overline\B_2,\ldots,\overline\B_k$ of $\B(M(G))$ into $k$ parts as follows. For each spanning tree $T$ of $G$, pick a ``representative'' spanning tree $T'$ of $H$ contained in $T$, and put $T$ in $\overline\B_i$ for $i$ such that $\B_i$ contains the representative $T'$. Two spanning trees of $G$ in the same $\overline\B_i$ contain by definition two spanning trees of $H$ at distance less than $\diame\bigl(\B(M(H))\bigr)$, i.e., two spanning trees of $H$ sharing at least $t(H)+1$ edges. Two spanning trees of $G$ in the same $\overline\B_i$ share therefore at least $t(H)+1$ edges and, as noted above, this implies that they cannot realize the diameter of $\B(M(G))$. This shows that the inequality $f(M(G)) \leq f(M(H))$ holds.
\end{proof}

Proposition~\ref{prop:graphic-bn} shows that the Borsuk number of $M(G)$, for the graph $G$ of Figure~\ref{fig:graphic}, is at most $3$, and an easy case-checking shows that it cannot be $2$ or less.

\section{Open questions}\label{sec:open}

We think that the main open question is whether every matroid with at least two bases has the Borsuk property. According to Section~\ref{sec:comb-geo}, this is equivalent to the question whether there exist matroid polytopes that are counter-examples to the Borsuk conjecture. The answer to this question is unknown.

\begin{conjecture}\label{conj:main}
Every matroid with at least two bases has the Borsuk property.
\end{conjecture}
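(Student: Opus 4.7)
The plan is to attack Conjecture~\ref{conj:main} by combining the reductions already proved with a Kneser-style coloring for matroids satisfying the strong basis intersection property. By Theorem~\ref{thm:main}, Proposition~\ref{prop:dual} and Corollary~\ref{cor:conn}, it suffices to prove the statement for connected matroids $M$ with the strong basis intersection property. The goal then becomes to show $f(M) \le n$, where $n$ is the size of the ground set.

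The first key step would be to generalize Lemma~\ref{lem:kneser} beyond the disjoint-bases case. Let $s \coloneqq \min_{B,B' \in \B(M)}|B \cap B'|$, so that $\diame(\B(M)) = 2(\rk(M) - s)$. I would introduce the \emph{$s$-Kneser graph} $\KG_s(M)$ on vertex set $\B(M)$ with edges joining pairs of bases whose intersection has size exactly $s$. The same argument as in Lemma~\ref{lem:kneser} then gives $\chi(\KG_s(M)) = f(M)$, reducing the conjecture to the bound $\chi(\KG_s(M)) \le n$ for every connected $M$ with the strong basis intersection property.

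To bound this chromatic number, I would pursue two complementary routes. The first route tries to upgrade Proposition~\ref{prop:upper}\ref{coc}: rather than coloring a basis $B$ by an element of $B \cap C^\star$ for a single cocircuit $C^\star$ (which fails because no pair of bases is disjoint here), one would build a small \emph{family} of cocircuits $C_1^\star,\dots, C_k^\star$ with $k\le n$ and color each basis $B$ by a carefully chosen element in $\bigcup_i (B \cap C_i^\star)$, engineered so that any two bases at distance $\diame(\B(M))$ receive different labels; the structural target is that minimum-intersection pairs must be separated by some cocircuit, which ought to follow from basis exchange applied to the elements of $B\setminus B'$. The second route is inductive and uses the constructions of Section~\ref{sec:sbip}: Proposition~\ref{prop:ser}, together with the observation that series/parallel connections preserve the strong basis intersection property (Lemma~\ref{lem:connec}), should allow one to reduce arbitrary matroids in this class to ``irreducible'' building blocks, on which one would handle the bound by hand---with Theorem~\ref{thm:rankr} disposing of the regime $n \ge 2^{r-2}(r+2)^2$.

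The hard part, and almost certainly the reason the conjecture is still open, is the lack of a structural description of connected matroids with the strong basis intersection property: the examples in Section~\ref{sec:sbip} ($\Theta_n$, series connections $\Ser(M,M^\star)$, the graphic family from Section~\ref{subsec:graphic}) look quite different from one another, and neither a unified excluded-minor characterization nor a canonical decomposition is available to feed into an induction. Concretely, the cocircuit route requires one to certify that a \emph{small} set of cocircuits jointly ``shatters'' all minimum-intersection pairs, and this seems to demand a non-trivial matroid-theoretic lemma of independent interest; the inductive route runs into the difficulty that the bound of Theorem~\ref{thm:rankr} is exponential in $r$, so it dominates the target $n$ precisely in the small-$n$, large-$r$ regime where the ad hoc examples of Section~\ref{sec:sbip} live and where a new idea seems unavoidable.
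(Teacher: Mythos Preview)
The statement you are addressing is Conjecture~\ref{conj:main}, which the paper explicitly places in Section~\ref{sec:open} (``Open questions'') as an \emph{open problem}; there is no proof in the paper to compare against. Your text is not a proof either, and you are candid about this: you outline two possible strategies (a cocircuit-family coloring and a structural induction via series/parallel decomposition) and then correctly identify why each one stalls. So the ``gap'' is simply that the conjecture is unproved, by both you and the authors.

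A few remarks on the content of your outline. Your generalization of Lemma~\ref{lem:kneser} to an $s$-Kneser graph $\KG_s(M)$ with $\chi(\KG_s(M))=f(M)$ is correct and is essentially implicit in the paper's handling of $\Theta_n$ and of Theorem~\ref{thm:rankr}. Your first route (a family of cocircuits that jointly shatter minimum-intersection pairs) is close in spirit to the proof of Theorem~\ref{thm:rankr}, which uses a family of \emph{bases} $B_0,B(e)$ rather than cocircuits to produce distinguishing $(s+1)$-tuples; that proof already shows how such a scheme yields a bound depending only on $r$, and your diagnosis that this bound is too weak in the ``small $n$, large $r$'' regime is exactly right. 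Your second route is less promising than you suggest: Lemma~\ref{lem:connec} and Proposition~\ref{prop:ser} give \emph{constructions} of matroids with the strong basis intersection property, not a \emph{decomposition} of an arbitrary such matroid into series/parallel pieces, so there is no induction to run without an additional structure theorem---which, as you note, is not available.
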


Another question that arises naturally is whether the inequality in Proposition~\ref{prop:conn} can be strict.

\begin{question}\label{quest:hedet}
Do there exist matroids $M$ and $M'$ such that $f(M \oplus M') < \min(f(M),f(M'))$?
\end{question}

This question is actually related the celebrated Hedetniemi conjecture~\cite{hedetniemi1966homomorphisms}, which has been refuted by Shitov in 2019~\cite{shitov2019counterexamples}. It deals with the chromatic number of the {\em categorical product} of two graphs $G=(V,E)$ and $G'=(V',E')$, denoted by $G \times G'$ and defined as follows. The vertex set of $G \times G'$ is the set of all ordered pairs $(v,v') \in V \times V'$. Edges are formed by the unordered pairs $(u,u')(v,v')$ such that $uv \in E$ and $u'v' \in E'$.

Hedetniemi's conjecture states that $\chi(G \times H) = \min(\chi(G),\chi(H))$. Noticing that
\[
\KG(M \oplus M') = \KG(M) \times \KG(M') \, ,
\] Lemma~\ref{lem:kneser} shows that a positive answer to Question~\ref{quest:hedet} for matroids with disjoint bases would build a counter-example to Hedetniemi conjecture with matroid Kneser graphs. No such example is known so far. Note that the symbol `$\times$' is used in this paragraph with two different meanings: it represents the categorical product of graphs, but also the usual Cartesian product of sets (when we consider the product of the vertex sets). In the next paragraph, it is again used with this latter meaning.

By the equality $P_{M \oplus M'} = P_M \times P_{M'}$, Question~\ref{quest:hedet} is also related to whether the Borsuk property is kept when we perform the Cartesian product of two matroid polytopes having the Borsuk property. As far as we are aware, this question, even for general polytopes, has not been studied up to now.

%\todo[inline]{I have not stated any question about minors and BIP. Indeed, the question was whether having $U_{k,2k}$ as a minor implies being non-BIP. But, with series connection, it is easy to build connected matroids with $U_{k,2k}$ as minor and with the basis intersection property.}

\bibliographystyle{amsplain}
\bibliography{bibliography}

\end{document}